\begin{document}
\theoremstyle{plain}
\newtheorem{Definition}{Definition}[section]
\newtheorem{Proposition}{Proposition}[section]
\newtheorem{Property}{Property}[section]
\newtheorem{Theorem}{Theorem}[section]
\newtheorem{Lemma}[Theorem]{\hspace{0em}\bf{Lemma}}
\newtheorem{Corollary}[Theorem]{Corollary}
\newtheorem{Remark}{Remark}[section]

\setlength{\oddsidemargin}{ 1cm}  
\setlength{\evensidemargin}{\oddsidemargin}
\setlength{\textwidth}{13.50cm}
\vspace{-.8cm}

\noindent  {\LARGE Balanced  metrics on some Hartogs type domains over bounded
symmetric domains}\\\\
\noindent\text{Zhiming Feng  }\\
\noindent\small {School of Mathematical and Information Sciences, Leshan Normal University, Leshan, Sichuan 614000, P.R. China } \\
\noindent\text{Email: fengzm2008@163.com}

\vskip 5pt
\noindent\text{Zhenhan Tu$^{*}$ }\\
\noindent\small {School of Mathematics and Statistics, Wuhan
University, Wuhan, Hubei 430072, P.R. China} \\
\noindent\text{Email: zhhtu.math@whu.edu.cn
}
\renewcommand{\thefootnote}{{}}
\footnote{\hskip -16pt {$^{*}$Corresponding author. \\}}
\\

\normalsize \noindent\textbf{Abstract}\quad {The definition of balanced metrics was originally given by Donaldson
in the case of a compact polarized K\"{a}hler manifold in 2001, who
also established the existence of such metrics on any compact
projective K\"{a}hler manifold with constant scalar curvature.
Currently, the only noncompact manifolds on which
 balanced metrics are known to exist are homogeneous domains.
The generalized Cartan-Hartogs domain
$\big(\prod_{j=1}^k\Omega_j\big)^{{\mathbb{B}}^{d_0}}(\mu)$ is
defined as the Hartogs type domain constructed over the product
$\prod_{j=1}^k\Omega_j$ of irreducible bounded symmetric domains
$\Omega_j$ $(1\leq j \leq k)$, with the fiber over each point
$(z_1,\cdots,z_k)\in \prod_{j=1}^k\Omega_j$ being a ball in
$\mathbb{C}^{d_0}$ of the radius
$\prod_{j=1}^kN_{\Omega_j}(z_j,\overline{z_j})^{\frac{\mu_j}{2}}$ of the
product of positive powers of their generic norms. Any such domain
$\big(\prod_{j=1}^k\Omega_j\big)^{{\mathbb{B}}^{d_0}}(\mu)$ $(k\geq
2)$ is a bounded nonhomogeneous domain. The purpose of this paper is
to obtain necessary and sufficient conditions for the metric $\alpha
g(\mu)$ $(\alpha>0)$ on the domain
$\big(\prod_{j=1}^k\Omega_j\big)^{{\mathbb{B}}^{d_0}}(\mu)$ to be a
balanced metric, where $g(\mu)$ is its canonical metric. As the main
contribution of this paper, we obtain the existence of balanced
metrics for a class of such bounded nonhomogeneous domains.
\smallskip\\\
\textbf{Key words:} Balanced metrics \textperiodcentered \; Bergman
kernels \textperiodcentered \; Bounded symmetric domains
\textperiodcentered \; Cartan-Hartogs domains \textperiodcentered \;
K\"{a}hler metrics
\smallskip\\\
\textbf{Mathematics Subject Classification (2010):} 32A25
  \textperiodcentered \, 32M15  \textperiodcentered \, 32Q15

\setlength{\oddsidemargin}{-.5cm}  
\setlength{\evensidemargin}{\oddsidemargin}
\pagenumbering{arabic}
\renewcommand{\theequation}
{\arabic{section}.\arabic{equation}}
\setcounter{section}{0}
 \setcounter{equation}{0}
\section{Induction}

The expansion of the Bergman kernel has received a lot of attention
recently, due to the influential work of Donaldson, see e.g. \cite{Donaldson},
about the existence and uniqueness of constant
scalar curvature K\"{a}hler metrics (cscK metrics). Donaldson used
the asymptotics of the Bergman kernel proved by Catlin \cite{Cat}
and Zelditch \cite{Zeld} and the calculation of Lu \cite{Lu} of the
first coefficient in the expansion to give conditions for the
existence of cscK metrics. This work inspired many papers on the
subject since then. For the reference of the expansion of the
Bergman kernel, see also Engli\v{s} \cite{E2}, Loi \cite{Lo},
Ma-Marinescu \cite{MM07, MM08, MM12}, Xu \cite{X}
 and references therein.

Assume that $D$ is a bounded domain in $\mathbb{C}^n$ and $\varphi$
is a strictly plurisubharmonic function on $D$. Let $g$ be a
K\"{a}hler  metric  on $D$ associated to the K\"{a}hler form
$\omega=\frac{\sqrt{-1}}{2\pi}\partial\overline{\partial}\varphi$.
For $\alpha>0$, let $\mathcal{H}_{\alpha}$ be the weighted Hilbert
space of square integrable holomorphic functions on $(D, g)$ with
the weight $\exp\{-\alpha \varphi\}$, that is,
$$\mathcal{H}_{\alpha}:=\left\{ f\in \textmd{Hol}(D): \int_{D}|f|^2\exp\{-\alpha \varphi\}\frac{\omega^n}{n!}<+\infty\right\},$$
where $\textmd{Hol}(D)$ denotes the space of holomorphic functions
on $D$. Let $K_{\alpha}(z,\overline{z})$ be the Bergman kernel
(namely, the reproducing kernel) of the Hilbert space
$\mathcal{H}_{\alpha}$ if $\mathcal{H}_{\alpha}\neq \{0\}$. The
Rawnsley's $\varepsilon$-function on $D$ associated to the metric
$g$  is defined by
\begin{equation}\label{eq1.4}
 \varepsilon_{\alpha}(z):=\exp\{-\alpha \varphi(z)\}K_{\alpha}(z,\overline{z}),\;\; z\in D.
\end{equation}
Note the Rawnsley's $\varepsilon$-function depends only on the
metric $g$ and not on the choice of the K\"{a}hler potential
$\varphi$ (which is defined up to an addition with the real part of
a holomorphic function on $D$). The function
$\varepsilon_{\alpha}(z)$ has appeared in the literature under
different names. The earliest one was probably the $\eta$-function
of Rawnsley \cite{Ra} (later renamed to $\varepsilon$-function in
Cahen-Gutt-Rawnsley \cite{CGR}) defined for arbitrary K\"ahler
manifolds, followed by the distortion function of Kempf \cite{Ke}
and Ji \cite{Ji} for the special case of Abelian varieties, and of
Zhang \cite{Zhang} for complex projective varieties.

The asymptotics of the Rawnsley's $\varepsilon$-function $
\varepsilon_{\alpha}$ was expressed in terms of the parameter
$\alpha$ for compact manifolds by Catlin \cite{Cat} and Zelditch
\cite{Zeld} (for $\alpha\in \mathbb{N}$) and for non-compact
manifolds by Ma-Marinescu \cite{MM07,MM08}. In some particular case
it was also proved by Engli\v{s} \cite{E1,E2}.

\begin{Definition} The metric $\alpha g$ on $D$ is balanced  if the Rawnsley's
$\varepsilon$-function $\varepsilon_{\alpha}(z)$ $(z\in D)$ is a
positive constant on $D$.
\end{Definition}

The metric for which the function $\varepsilon_{\alpha}(z)$ is
constant was also called critical metric in Zhang \cite{Zhang}. The
definition of balanced metrics was originally given by Donaldson
\cite{Donaldson} in the case of a compact polarized K\"{a}hler
manifold $(M,g)$ in 2001, who also established the existence of such
metrics on any (compact) projective K\"{a}hler manifold with
constant scalar curvature. The definition of balanced metrics was
generalized in Arezzo-Loi
 \cite{Are-Loi} and Engli\v{s} \cite{E3} to the noncompact case. We give only
the definition for those K\"{a}hler metrics which admit globally
defined potentials in this paper.

Let $g$ be a K\"{a}hler  metric on a bounded domain $D$ in
$\mathbb{C}^n$ associated to the K\"{a}hler form
$\omega=\frac{\sqrt{-1}}{2\pi}\partial\overline{\partial}\varphi$
for a strictly plurisubharmonic function $\varphi$ on $D$. Let
$\mathcal{H}_{\alpha}$ ($\alpha>0$) be the weighted Hilbert space of
square integrable holomorphic functions on $(D, g)$ with the weight
$\exp\{-\alpha \varphi\}$. Let $K_{\alpha}(z,\bar z)$ be the
reproducing kernel of $\mathcal{H}_{\alpha}$ if
$\mathcal{H}_{\alpha}\neq \{0\}$. Let $h$ be a K\"{a}hler  metric on
$D$ associated to the K\"{a}hler form
$\frac{\sqrt{-1}}{2\pi}\partial\overline{\partial}\ln
K_{\alpha}(z,\bar z)$. It is again readily seen that
$\frac{\sqrt{-1}}{2\pi}\partial\overline{\partial}\ln
K_{\alpha}(z,\bar z)$ is independent of the choice of the potential
$\varphi$ and thus is uniquely determined by the original metric
$g$. So, by definition, we have
\begin{equation*}
\frac{\sqrt{-1}}{2\pi}\partial\overline{\partial}\ln
\varepsilon_{\alpha}(z)+ \alpha \cdot
\frac{\sqrt{-1}}{2\pi}\partial\overline{\partial} \varphi(z) =
\frac{\sqrt{-1}}{2\pi}\partial\overline{\partial}\ln
K_{\alpha}(z,\overline{z}),\;\; z\in D.
\end{equation*}
Therefore, if the metric $\alpha g$ is balanced on $D$, then we have
$\alpha g =h $ on $D$.

Recall that the Fubini-Study metric $ds_{FS}$ on the $N$-dimensional
complex projective space $P^N(\mathbb{C})$ ($1\leq N \leq +\infty$)
is this metric whose K\"{a}hler form $\omega_{FS}$  in homogeneous
coordinates $Z_0, Z_1, Z_2,\cdots$ is given by $\omega_{FS}
=\frac{\sqrt{-1}}{2\pi}\partial\overline{\partial}\ln
(\sum_{j=0}^{N}|Z_j|^2).$ A balanced metric $\alpha g$ on $D$ can be
also viewed as a particular projectively induced K\"{a}hler metric
for which the K\"{a}hler immersion
$$
f: D\rightarrow  P^N(\mathbb{C}), \;\;  z \longmapsto
[f_0(z),f_1(z),\cdots, f_j(z),\cdots], $$ where $N+1$ $(0\leq N \leq
+\infty)$ denotes the complex dimension of $\mathcal{H}_{\alpha}$,
is given by an orthonormal basis $\{f_j\}$ of the Hilbert space
$\mathcal{H}_{\alpha}$ if $\mathcal{H}_{\alpha}\neq \{0\}$. Indeed,
the map $f: D\rightarrow P^N(\mathbb{C})$ is well-defined since
$\varepsilon_{\alpha}$ is a positive constant and hence for any
$z\in D$ there exists some $f_j$ such that $f_j(z)\not= 0.$
Moreover,
\begin{align*}
f^*\omega_{FS}&
=\frac{\sqrt{-1}}{2\pi}\partial\overline{\partial}\ln
\sum_{j=0}^{N}|f_j(z)|^2  \\
&=\frac{\sqrt{-1}}{2\pi}\partial\overline{\partial}\ln
K_{\alpha}(z,\bar z)  \\
&=\frac{\sqrt{-1}}{2\pi}\partial\overline{\partial}\ln
\varepsilon_{\alpha}(z)+\alpha\cdot
\frac{\sqrt{-1}}{2\pi}\partial\overline{\partial}\varphi(z).
\end{align*}
Hence, if the metric $\alpha g$ on $D$ is balanced, then the map $f$
is a holomorphically isometric mapping from $(D, \alpha g)$ into
$(P^N(\mathbb{C}), ds_{FS})$ (Note a projectively induced metric is
not always balanced, e.g., see Example 1 in Loi-Zedda \cite{LZ}).
The map $f$ is called in Cahen-Gutt-Rawnsley \cite{CGR} the coherent
states map. Balanced metric plays a fundamental role in the geometric
quantization and quantization by deformation of a K\"{a}hler
manifold (e.g., see Berezin \cite{Berezin}, Cahen-Gutt-Rawnsley \cite{CGR}, Engli\v{s} \cite{E0} and Lui\'c
\cite{Lui}). It also related to the Bergman kernel expansion (e.g.,
see Loi \cite{Lo} and references therein).

We remark that in Donaldson's study (e.g., see Donaldson
\cite{Donaldson}) of asymptotic stability for polarized algebraic
manifolds, balanced metrics play a central role when the polarized algebraic manifolds admit K\"{a}hler metrics of constant scalar
curvature. The balanced metrics are also of significance in some
questions concerning (semi) stability of projective algebraic
varieties (see e.g., Mabuchi \cite{Mabuchi} and Zhang \cite{Zhang}).
For the study of the balanced metrics,  see also Cuccu-Loi
\cite{Cuc-Loi},  Engli\v{s} \cite{E0,E3},  Feng-Tu \cite{FT}, Greco-Loi
\cite{Gre-Loi},  Loi \cite{Loi2}, Loi-Mossa \cite{Loi-Mossa}, Loi-Zedda \cite{Loi-Zed, LZ},
Loi-Zedda-Zuddas \cite{Loi-Zed-Zud} and Zedda \cite{Zed}.

However, very little seems to be known about the existence of
balanced metrics on general noncompact manifolds or even domains in
$\mathbb{C}^n$ (e.g., see Engli\v{s} \cite{E3} and Loi-Mossa \cite{Loi-Mossa}). Currently, the only
noncompact manifolds on which balanced metrics are known to exist are the homogeneous spaces.
 In the nonhomogeneous setting, the problem of
existence of balanced metrics seems to be open.

In this paper we will obtain the existence of balanced metrics on a
class of bounded nonhomogeneous domains.

\vskip 5pt

Every Hermitian symmetric manifold of noncompact type can be
realized as a bounded symmetric domain in some $\mathbb{C}^d$ by the
Harish-Chandra embedding theorem. Then, the classification of
Hermitian symmetric manifolds of noncompact type coincides with the
classification of bounded symmetric domains. In 1935, E. Cartan
proved that there exist only six types of irreducible bounded
symmetric domains. They are four types of classical bounded
symmetric domains and two exceptional domains.

Let $\mathcal{M}_{m,n}$ be the set of all $m\times n$ matrices
$z=(z_{ij})$ with complex entries. Let ${\overline z}$ be the
complex conjugate of the matrix $z$ and let ${z}^t$ be the transpose
of the matrix $z$. $I$ denotes the identity matrix. If a square
matrix $z$ is positive definite, then we write $z>0$. For each
irreducible bounded symmetric domain $\Omega$ (refer to Hua
\cite{Hua}), we list the generic norm $N_\Omega(z,\overline{z})$ of
$\Omega$ according to its type as following.

$(i)$ If $\Omega=\Omega_I(m,n):=\{z\in \mathcal{M}_{m,n}:
I-z{\overline z}^t>0 \} \subset \mathbb{C}^d$ ($1\leq m\leq n,\;
d=mn$) (the classical domains of type $I$), then
$N_\Omega(z,\overline{z})=\det(I-z{\overline z}^t).$ Specially, when
$m=1$, then $\Omega$ is the unit ball $\mathbb{B}^n$ in $
\mathbb{C}^{n}$ and $N_\Omega(z,\overline{z})=1-\|z\|^2.$

$(ii)$ If $\Omega= \Omega_{II}(n):=\{z\in \mathcal{M}_{n,n}: z^t=-z,
I-z{\overline z}^t>0 \}\subset \mathbb{C}^d$ ($n\geq 5,\;
d={n(n-1)}/{2}$) (the classical domains of type $II$), then
$N_\Omega(z,\overline{z})=(\det(I-z{\overline z}^t))^{1/2} .$

$(iii)$ If $\Omega= \Omega_{III}(n):=\{z\in \mathcal{M}_{n,n}:
z^t=z, I-z{\overline z}^t>0 \}\subset \mathbb{C}^d$ ($n\geq 2,\;
d=n(n+1)/2$) (the classical domains of type $III$), then
$N_\Omega(z,\overline{z})=\det(I-z{\overline z}^t).$

$(iv)$ If $\Omega=\Omega_{IV}(n):=\{z\in \mathbb{C}^{n}:
1-2z{\overline z}^t+|zz^t|^2>0,  z{\overline z}^t<1\} $ ($n\geq 5$)
(the classical domains of type $IV$), then
$N_\Omega(z,\overline{z})=1-2z{\overline z}^t +|zz^t|^2.$

There are two more exceptional domains  $\Omega_{\textrm{V}}(16)$
and  $\Omega_{\textrm{VI}}(27)$
 of dimension 16 and 27 respectively. We call them Type V and Type VI,
respectively. For the precise definitions of these exceptional
domains, see Part V in \cite{FKKLR}.

Now we introduce the numerical invariants: the rank $r$,  the
multiplicities $a$ and $b$, and the genus $p = 2 + a(r-1)+ b$ for an
irreducible bounded symmetric domain. The list of numerical
invariants of each irreducible bounded symmetric domain of six types
is the following:

(1) For $\Omega_I(m,n)\; (1\leq m\leq n)$, its rank $r=m$, its
multiplicities $a=2$ and $b=n-m$,  and its genus $p=m+n$.

(2) For $\Omega_{II}(2n)\; (n\geq 3) $, its rank $r=n$, its
multiplicities $a=4$ and $b=0$,  and its genus $p=2(2n-1)$;
 For $\Omega_{II}(2n+1)\;(n\geq 2) $, its rank $r=n$, its
multiplicities $a=4$ and $b=2$,  and its genus $p=4n$.

(3) For $\Omega_{III}(n)\;(n\geq 2)$,  its rank $r=n$, its
multiplicities $a=1$ and $b=0$,  and its genus $p=n+1$.

(4) For $\Omega_{IV}(n)\;(n\geq 5)$, its rank $r=2$, its
multiplicities $a=n-2$ and $b=0$,  and its genus $p=n$.

(5) For $\Omega_{\textrm{V}}(16)$, its rank $r=2$, its
multiplicities $a=6$ and $b=4$, and its genus $p=12$.

(6) For  $\Omega_{\textrm{VI}}(27)$, its rank $r=3$, its
multiplicities $a=8$ and $b=0$,  and its genus $p=18$.

Remark that $\Omega_{II}(2),\Omega_{III}(1)$ and $\Omega_{IV}(1)$
are biholomorphically equivalent to $\Omega_I(1,1)$;
$\Omega_{II}(3)$ is biholomorphically equivalent to $\Omega_I(3,1)$;
$\Omega_{II}(4)$ is biholomorphically equivalent to
$\Omega_{IV}(6)$; $\Omega_{IV}(3)$ is biholomorphically equivalent
to $\Omega_{III}(2)$, and $\Omega_{IV}(4)$ is biholomorphically
equivalent to $\Omega_I(2,2)$. But $\Omega_{IV}(2)$ is not an
irreducible bounded symmetric domain, since $\Omega_{IV}(2)$ is
biholomorphically equivalent to the bidisc $\mathbb{B}\times
\mathbb{B}$.

Throughout this paper, each irreducible bounded symmetric domain is
always one of irreducible bounded symmetric domains of six types.

\vskip 5pt

Let $\Omega$ be an irreducible bounded symmetric domain in
$\mathbb{C}^d$ of genus $p$ in its Harish-Chandra realization. Since
$\Omega$ is a bounded circular domain and contains the origin, there
is a homogeneous holomorphic polynomial set
$$\left\{\frac{1}{\sqrt{V(\Omega)}},h_1(z),h_2(z),\cdots \right\},$$
such that it is an orthonormal basis of the Hilbert space
$A^2(\Omega)$ of square-integrable holomorphic functions on
$\Omega$, where $V(\Omega )$ is the Euclidean volume of $\Omega$ in
$\mathbb{C}^d$ and $\deg h_j(z)\geq 1$ (so $h_j(0)=0$) for
$j=1,2,\cdots$. Then the Bergman kernel $K_{\Omega}(z,\bar{\xi})$ of
$\Omega$ is given by
\begin{equation*}
K_{\Omega}(z,\bar{\xi})=\frac{1}{V(\Omega)}+
h_1(z)\overline{h_1(\xi)}+ h_2(z)\overline{h_2(\xi)}+\cdots
\end{equation*}
for all $z, \xi\in \Omega$. Obviously, $V(\Omega)K_{\Omega}(0,0)=1$
and  $1\leq V(\Omega)K_{\Omega}(z,\bar{z})<+\infty$ for all $z\in
\Omega$. The generic norm of $\Omega$ is defined by
$$N_{\Omega}(z,\bar{\xi}):=\left(V(\Omega)K_{\Omega}(z,\bar{\xi})\right)^{-\frac{1}{p}}\;\;\; (z,\xi\in\Omega),$$
where $(V(\Omega)K_{\Omega}(z,\bar{\xi}))^{-\frac{1}{p}}:=\exp
(-\frac{1}{p}\ln (V(\Omega)K_{\Omega}(z,\bar{\xi})))$, in which
$\ln$ denotes the principal branch of logarithm (note
$K_{\Omega}(z,\bar{\xi})\neq 0$ for all $z,\xi\in \Omega$). Thus $
N_{\Omega}(0,0)=1$,  $0< N_{\Omega}(z,\bar{z})\leq 1$ for all $z\in
\Omega$ and $N_{\Omega}(z,\bar{z})=0$ on the boundary of $\Omega$.

Let $\Omega_i\subset \mathbb{C}^{d_i}$ be an irreducible bounded
symmetric domain $(1\leq i\leq k)$. For given positive integer
$d_0$,  positive real numbers $\mu_i$ $(1\leq i\leq k)$, the
generalized Cartan-Hartogs domain $ \big(\prod_{j=1}^k\Omega_j
\big)^{{\mathbb{B}}^{d_0}}(\mu)$ is defined by
\begin{equation}\label{eq1.1}
 \big(\prod_{j=1}^k\Omega_j\big)^{\mathbb{B}^{d_0}}(\mu):=\left\{(z,w)\in \prod_{j=1}^k \Omega_j\times \mathbb{B}^{d_0}
  : \|w\|^2<\prod_{j=1}^kN_{\Omega_j}(z_j,\overline{z_j})^{\mu_j} \right\},
\end{equation}
where $\mu=(\mu_1,\ldots,\mu_k) \in \mathbb{(R_+)}^k,\;
z=(z_1,\ldots,z_k)\in
\mathbb{C}^{d_1}\times\cdots\times\mathbb{C}^{d_k}$, $\|\cdot\|$ is
the standard Hermitian norm in $\mathbb{C}^{d_0}$,
$N_{\Omega_j}(z_j,\overline{z_j})$ is the generic norm of $\Omega_j$
$(1\leq i\leq k)$ and $\mathbb{B}^{d_0}:=\{w\in
\mathbb{C}^{d_0}:\|w\|^2<1\}$. Note
$\prod_{j=1}^kN_{\Omega_j}(0,0)^{\mu_j}= 1,$
$0<\prod_{j=1}^kN_{\Omega_j}(z_j,\overline{z_j})^{\mu_j}\leq 1$ on
$\prod_{j=1}^k\Omega_j$ and
$\prod_{j=1}^kN_{\Omega_j}(z_j,\overline{z_j})^{\mu_j}=0$ on the
boundary $\partial(\prod_{j=1}^k\Omega_j)$. Thus
$\partial(\prod_{j=1}^k\Omega_j)\subset
\partial\big(\big(\prod_{j=1}^k\Omega_j\big)^{\mathbb{B}^{d_0}}(\mu)\big).$
For the reference of the generalized Cartan-Hartogs domains, see
Ahn-Park \cite{AP}, Tu-Wang \cite{TW} and Wang-Hao \cite{WH}.

When $k\geq 2,$ then any generalized Cartan-Hartogs domain $
\big(\prod_{j=1}^k\Omega_j \big)^{{\mathbb{B}}^{d_0}}(\mu)$ is a
bounded nonhomogeneous domain in $\mathbb{C}^{d_0+\cdots+d_k}.$ In
fact, for $(z_1,\ldots,z_k,w)\in \prod_{j=1}^k \Omega_j\times
(\mathbb{C}^{d_0}\setminus \{0\}),$ we have
\begin{align*}
\ln \|w\|^2  - \ln
(\prod_{j=1}^kN_{\Omega_j}(z_j,\overline{z_j})^{\mu_j}) =\ln \|w\|^2
+ \sum\limits_{j=1}^m{\frac{\mu_j}{p_j}} \ln ( V(\Omega_j)
K_{\Omega_j} (z_j,\overline{z_j}))
\end{align*}
is real analytic on $\prod_{j=1}^k \Omega_j\times
(\mathbb{C}^{d_0}\setminus \{0\})$ and each ${\frac{\mu_j}{p_j}}\ln
( V(\Omega_j) K_{\Omega_j} (z_j,\overline{z_j}) )$ is a real
analytic strictly  plurisubharmonic function on $ \Omega_j$ $(1\leq
j \leq k)$. Thus, the generalized Cartan-Hartogs domain $
\big(\prod_{j=1}^k\Omega_j \big)^{{\mathbb{B}}^{d_0}}(\mu)$ is
strongly pseudoconvex at the boundary part
\begin{align*}
& \left\{(z,w)\in \prod_{j=1}^k \Omega_j\times \mathbb{C}^{d_0}
  : \ln \|w\|^2
+ \sum\limits_{j=1}^m{\frac{\mu_j}{p_j}} \ln ( V(\Omega_j)
K_{\Omega_j} (z_j,\overline{z_j}))=0,\;
  w\not=0 \right\}   \\
& \left( =\left\{(z,w)\in \prod_{j=1}^k \Omega_j\times
\mathbb{C}^{d_0}
  : \|w\|^2=\prod_{j=1}^kN_{\Omega_j}(z_j,\overline{z_j})^{\mu_j},\;
  w\not=0 \right\} \right). \end{align*}
Therefore, if a generalized Cartan-Hartogs domain $
\big(\prod_{j=1}^k\Omega_j \big)^{{\mathbb{B}}^{d_0}}(\mu)$ $(k\geq
2)$ is homogeneous, then the generalized Cartan-Hartogs domain must
be biholomorphic  to the unit ball by the Wong-Rosay theorem (see
Rudin \cite{R}, Theorem 15.5.10 and its Corollary), and thus, it is
a strongly pseudoconvex domain. From the boundary
$$b\big(\big(\prod_{j=1}^k\Omega_j\big)^{{\mathbb{B}}^{d_0}}(\mu)\big)\supset
b\big(\prod_{j=1}^k\Omega_j\big),$$  we have that the boundary $b
\big((\prod_{j=1}^k\Omega_j \big)^{{\mathbb{B}}^{d_0}}(\mu))$
$(k\geq 2)$ contains a positive-dimensional complex submanifold.
This is impossible, since, in our case, the generalized
Cartan-Hartogs domain is strongly pseudoconvex. So any generalized
Cartan-Hartogs domain $ \big(\prod_{j=1}^k\Omega_j
\big)^{{\mathbb{B}}^{d_0}}(\mu)$ $(k\geq 2)$ is a bounded
nonhomogeneous domain.

For the generalized Cartan-Hartogs domain $
\big(\prod_{j=1}^k\Omega_j \big)^{{\mathbb{B}}^{d_0}}(\mu)$, define
\begin{equation}\label{eq1.2}
 \Phi(z,w):=-\ln\left(\prod_{j=1}^kN_{\Omega_j}(z_j,\overline{z_j})^{\mu_j}-\|w\|^2\right).
\end{equation}
The K\"{a}hler form $\omega(\mu)$ on
$\big(\prod_{j=1}^k\Omega_j\big)^{{\mathbb{B}}^{d_0}}(\mu)$ is
defined by
\begin{equation}\label{eq1.3}
 \omega(\mu):=\frac{\sqrt{-1}}{2\pi}\partial
\overline{\partial}\Phi.
\end{equation}
 The canonical metric  $g(\mu)$ on  $\big(\prod_{j=1}^k\Omega_j\big)^{{\mathbb{B}}^{d_0}}(\mu)$
associated to  $\omega(\mu)$ is given by
$$ds^2=\sum_{i,j=1}^{n}\frac{\partial^2\Phi}{\partial Z_i \partial
\overline{Z_j}}dZ_i\otimes d\overline{Z_j},$$
where
$$n=\sum_{j=0}^kd_j,\;\;  Z=(Z_1,\ldots,Z_n):=(z,w).$$

Note that $\Phi(z,w)$ defined by \eqref{eq1.2} is also a real
analytic strictly plurisubharmonic exhaustion function for the
generalized Cartan-Hartogs domain
$\big(\prod_{j=1}^k\Omega_j\big)^{\mathbb{B}^{d_0}}(\mu).$ Thus $
\big(\prod_{j=1}^k\Omega_j \big)^{\mathbb{B}^{d_0}}(\mu)$ is a
bounded pseudoconvex domain in $\mathbb{C}^{d_0+\cdots+d_k}.$

For the generalized Cartan-Hartogs domain $ \big(
\big(\prod_{j=1}^k\Omega_j \big)^{{\mathbb{B}}^{d_0}}(\mu), g(\mu)
\big)$, we have (see Theorem \ref{Th:2.3} in this paper) that the
Rawnsley's $\varepsilon$-function admits the expansion:
\begin{equation}\label{eq1.5}
  \varepsilon_{\alpha}(z,w)=\sum_{j=0}^{n}a_j(z,w)\alpha^{n-j}, \;\; (z,w)\in
   \big(\prod_{j=1}^k\Omega_j \big)^{{\mathbb{B}}^{d_0}}(\mu),
\end{equation}
where $n=\sum_{j=0}^kd_j.$ By Th. 1.1 of Lu \cite{Lu},  Th. 4.1.2
and Th. 6.1.1 of Ma-Marinescu \cite{MM07}, Th. 3.11 of Ma-Marinescu
\cite{MM08} and Th. 0.1 of Ma-Marinescu \cite{MM12}, see also Th.
3.3 of Xu \cite{X}, we have
\begin{equation}\label{eq1.6}
\left\{  \begin{array}{ll}
    a_0 & =1, \\
    a_1 & = \frac{1}{2}k_g, \\
    a_2 & =\frac{1}{3}\triangle k_g+\frac{1}{24}|R|^2-\frac{1}{6}|Ric|^2+\frac{1}{8}k_g^2,
  \end{array}\right.
\end{equation}
where $k_g$, $\triangle$, $R$ and $Ric$  denote  the scalar
curvature, the Laplace, the curvature tensor and the Ricci curvature
associated to the canonical metric $g(\mu)$, respectively.

\vskip 5pt

In 2012, Loi-Zedda \cite{LZ} described balanced metrics on an
irreducible bounded symmetric domain $\Omega$ as follows.

\begin{Theorem}\textup{(Loi-Zedda \cite{LZ})} \label{Th:00}{Let $\Omega$ be an
irreducible bounded symmetric domain of genus $p$ equipped with its
Bergman metric $g_B$. Then the metric $\alpha g_B$ $(\alpha > 0)$ is
balanced if and only if $\alpha > \frac{p-1}{p}.$ }\end{Theorem}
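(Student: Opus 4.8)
The plan is to reduce the statement to two covariance identities --- for the Bergman kernel and for the generic norm --- combined with the homogeneity of $\Omega$. As K\"ahler potential of $g_B$ I take $\varphi=\ln K_{\Omega}(z,\overline{z})$, so that $\omega_B=\frac{\sqrt{-1}}{2\pi}\partial\overline{\partial}\varphi$; as recalled above, $\varepsilon_{\alpha}$ is independent of this choice. Since $\Omega$ has genus $p$ one has $K_{\Omega}(z,\overline{z})=V(\Omega)^{-1}N_{\Omega}(z,\overline{z})^{-p}$, hence $\exp\{-\alpha\varphi\}=V(\Omega)^{\alpha}N_{\Omega}(z,\overline{z})^{p\alpha}$. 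To pin down the volume form, write $g_{i\overline{j}}=\partial_i\partial_{\overline{j}}\ln K_{\Omega}$: the isometry property $\phi^{*}\omega_B=\omega_B$ for $\phi\in\mathrm{Aut}(\Omega)$ gives $|J_{\phi}(z)|^{2}\det(g_{i\overline{j}})(\phi(z))=\det(g_{i\overline{j}})(z)$, while the Bergman transformation rule gives $|J_{\phi}(z)|^{2}K_{\Omega}(\phi(z),\overline{\phi(z)})=K_{\Omega}(z,\overline{z})$; dividing, $\det(g_{i\overline{j}})/K_{\Omega}$ is $\mathrm{Aut}(\Omega)$-invariant, hence a positive constant by transitivity. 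Therefore
\[
\frac{\omega_B^{\,n}}{n!}=c\,K_{\Omega}(z,\overline{z})\,dV=\frac{c}{V(\Omega)}\,N_{\Omega}(z,\overline{z})^{-p}\,dV
\]
for Lebesgue measure $dV$, and the measure defining $\mathcal{H}_{\alpha}$ is, up to a positive constant, $N_{\Omega}(z,\overline{z})^{p\alpha}N_{\Omega}(z,\overline{z})^{-p}\,dV=N_{\Omega}(z,\overline{z})^{p(\alpha-1)}\,dV$. Thus $\mathcal{H}_{\alpha}$ is (a positive multiple of) the standard weighted Bergman space of $\Omega$ with weight exponent $p(\alpha-1)$.

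Next I would determine when $\mathcal{H}_{\alpha}\neq\{0\}$. On every compact subset of $\Omega$ the weight $N_{\Omega}^{\,p(\alpha-1)}$ lies between two positive constants, so $\mathcal{H}_{\alpha}$ is a reproducing kernel Hilbert space whenever it is nonzero; and it contains the constants precisely when $\int_{\Omega}N_{\Omega}(z,\overline{z})^{\,p(\alpha-1)}\,dV<+\infty$, which by the classical integrability computation for bounded symmetric domains (Hua \cite{Hua}; Faraut--Kor\'anyi) holds if and only if $p(\alpha-1)>-1$, i.e.\ $\alpha>\frac{p-1}{p}$. When the integral diverges I would show $\mathcal{H}_{\alpha}=\{0\}$ using the circular symmetry of the Harish--Chandra realization: for holomorphic $f\not\equiv 0$ the integrals of $|f|^{2}$ over the dilated boundaries $t\,\partial\Omega$ are bounded below by a positive constant, while $N_{\Omega}$ vanishes only to first order along a ray to a generic boundary point, forcing $\int_{\Omega}|f|^{2}N_{\Omega}^{\,p(\alpha-1)}\,dV=+\infty$. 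Hence for $\alpha\le\frac{p-1}{p}$ there is no reproducing kernel and $\alpha g_B$ is not balanced.

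For $\alpha>\frac{p-1}{p}$ homogeneity forces $\varepsilon_{\alpha}$ to be constant. Since $\Omega$ is convex, hence simply connected, the nowhere-vanishing holomorphic Jacobian $J_{\phi}$ of $\phi\in\mathrm{Aut}(\Omega)$ admits a branch $J_{\phi}^{\alpha}:=\exp(\alpha\log J_{\phi})$, and the covariance identities above show that $f\mapsto(f\circ\phi)\,J_{\phi}^{\alpha}$ is a unitary automorphism of $\mathcal{H}_{\alpha}$; since $\phi$ is a $g_B$-isometry and $\varepsilon_{\alpha}$ depends only on $g_B$, we get $\varepsilon_{\alpha}\circ\phi=\varepsilon_{\alpha}$, and transitivity of $\mathrm{Aut}(\Omega)$ makes $\varepsilon_{\alpha}$ constant, necessarily positive. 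Equivalently and more concretely, the weighted Bergman kernel is $K_{\alpha}(z,\overline{z})=c_{\alpha}N_{\Omega}(z,\overline{z})^{-p\alpha}$ for a positive constant $c_{\alpha}$, so that
\[
\varepsilon_{\alpha}(z)=\exp\{-\alpha\varphi(z)\}\,K_{\alpha}(z,\overline{z})=V(\Omega)^{\alpha}N_{\Omega}(z,\overline{z})^{p\alpha}\cdot c_{\alpha}N_{\Omega}(z,\overline{z})^{-p\alpha}=c_{\alpha}V(\Omega)^{\alpha}.
\]
Together with the previous paragraph this yields the claimed equivalence.

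The formal parts --- the two covariance identities and the homogeneity argument --- are routine; I expect the main obstacle to be establishing the \emph{exact} threshold $\alpha>\frac{p-1}{p}$, namely the convergence of $\int_{\Omega}N_{\Omega}^{\,p(\alpha-1)}\,dV$ above it (the Gindikin/Faraut--Kor\'anyi Gamma-function evaluation, which depends on the rank and multiplicities encoded in $p$) together with the triviality of $\mathcal{H}_{\alpha}$ below it.
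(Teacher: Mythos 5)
The paper does not prove this statement: Theorem \ref{Th:00} is quoted from Loi--Zedda \cite{LZ} without proof, so there is no internal argument to compare against. Your proposal is correct and is essentially the standard argument (and the one in \cite{LZ}): reduce $\mathcal{H}_{\alpha}$ to the weighted Bergman space with weight $N_{\Omega}^{\,p(\alpha-1)}$ via $K_{\Omega}=V(\Omega)^{-1}N_{\Omega}^{-p}$ and the Aut-invariance of $\det(g_{i\overline{j}})/K_{\Omega}$, get the threshold from the Hua integral (the paper's \eqref{1.2}--\eqref{1.3}, whose smallest factor is $s+1$, gives convergence of $\int_{\Omega}N_{\Omega}^{s}\,dm$ exactly for $s>-1$), and use transitivity of $\mathrm{Aut}(\Omega)$ together with the unitarity of $f\mapsto (f\circ\phi)J_{\phi}^{\alpha}$ to force $\varepsilon_{\alpha}$ constant. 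The only place I would press you is the triviality of $\mathcal{H}_{\alpha}$ for $\alpha\le\frac{p-1}{p}$: your ray/boundary-order sketch works but is informal; a cleaner route (consistent with the paper's Section 2 machinery) is to expand $f$ into its Peter--Weyl components, note they are orthogonal for any $\mathcal{K}$-invariant measure, and apply the identity \eqref{eq} to see that every component has infinite norm once $\int_{\Omega}N_{\Omega}^{\,p(\alpha-1)}\,dm$ diverges. Either way the threshold $\alpha>\frac{p-1}{p}$ comes out exactly as you describe.
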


Let $ {\mathbb{B}}^{d}$ be the unit ball in $\mathbb{C}^d$  and  let
the metric $g_{hyp}$ on  $ {\mathbb{B}}^{d}$  be given by $$
ds^2=-\sum_{i,j=1}^{d}\frac{\partial^2\ln(1-\|z\|^2)}{\partial z_i
\partial \overline{z_j}}dz_i\otimes d\overline{z_j}.$$
We call $(\mathbb{B}^d, g_{hyp})$ the complex hyperbolic space. Note
$g_{hyp}=\frac{1}{d+1} g_B$ on $ {\mathbb{B}}^{d}$ and the genus
$p=d+1$ for $ {\mathbb{B}}^{d}$. Thus, by Theorem \ref{Th:00}, we
have that $\alpha g_{hyp}\;(\alpha>0)$ is a balanced metric on $
{\mathbb{B}}^{d}$ if and only if $\alpha>d$.

In the special case of $k=1,$  the generalized Cartan-Hartogs domain
$\Omega^{{\mathbb{B}}^{d_0}}(\mu)$ is also called the Cartan-Hartogs
domain. When $\Omega=\mathbb{B}^d,\; \mu=1,$ we have
$\Omega^{{\mathbb{B}}^{d_0}}(\mu)= \mathbb{B}^{d+d_0}.$ In 2012,
Loi-Zedda \cite{LZ} gave a characterization of the complex
hyperbolic space among the Cartan-Hartogs domains in terms of
balanced metrics as follows.

\begin{Theorem}\textup{(Loi-Zedda \cite{LZ})} \label{Th:01} {
Let $(\Omega^{{\mathbb{B}}^{d_0}}(\mu), g(\mu))$ be a Cartan-Hartogs
domain over the irreducible bounded symmetric domain $\Omega$ in
$\mathbb{C}^d$ with the canonical metric $g(\mu)$. Then the metric
$\alpha g(\mu)$ $(\alpha > 0)$ on $\Omega^{{\mathbb{B}}^{d_0}}(\mu)$
is balanced if and only if $\alpha > d+d_0$ and
$(\Omega^{{\mathbb{B}}^{d_0}}(\mu), g(\mu))$ is biholomorphically
isometric to the complex hyperbolic space $({{\mathbb{B}}}^{d+d_0},
g_{hyp})$ $($i.e., $\Omega=\mathbb{B}^d,\; \mu=1).$}\end{Theorem}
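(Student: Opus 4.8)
The plan is to reduce the balancedness of $\alpha g(\mu)$ on the Cartan-Hartogs domain $\Omega^{{\mathbb{B}}^{d_0}}(\mu)$ to an explicit analytic condition coming from the Bergman kernel of the associated weighted Hilbert space, and then to extract from that condition both the inequality $\alpha>d+d_0$ and the rigidity $\Omega=\mathbb{B}^d$, $\mu=1$. First I would compute the weighted Bergman kernel $K_\alpha((z,w),\overline{(z,w)})$ of $\mathcal H_\alpha$ for the potential $\Phi(z,w)=-\ln(N_\Omega(z,\overline z)^\mu-\|w\|^2)$, using the Forelli--Rudin type inflation: expanding in powers of $\|w\|^2$ (equivalently, in the $w$-homogeneous pieces), the computation reduces to the weighted Bergman kernel of $\Omega$ itself with weights $N_\Omega(z,\overline z)^{\mu s}$ for $s=\alpha,\alpha+1,\dots$, together with the standard integrals $\int_{\mathbb B^{d_0}}\|w\|^{2j}(\text{const}-\|w\|^2)^{\cdots}\,dw$. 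Because $\Omega$ is a bounded symmetric domain, those weighted Bergman kernels are known explicitly (Hua-type formulas / the Gindikin Gamma function of $\Omega$), so one obtains a closed expression for $\varepsilon_\alpha(z,w)$ as $N_\Omega(z,\overline z)^{\text{something}}\cdot(N_\Omega(z,\overline z)^\mu-\|w\|^2)^{\text{something}}$ times an explicit series in the variable $t:=\|w\|^2/N_\Omega(z,\overline z)^\mu$ whose coefficients involve products of Gamma factors and the Hua polynomial of $\Omega$ in $\alpha\mu$.

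The key step is then to impose $\varepsilon_\alpha(z,w)\equiv\text{const}$. Since $\varepsilon_\alpha$ is a function of $t$ and of $N_\Omega(z,\overline z)$ through the generic norm, and $N_\Omega$ varies nontrivially on $\Omega$ whenever $\Omega$ has rank $\ge 2$ or $d\ge 2$, I would first let $w\to 0$ (i.e. $t\to 0$): this kills the Hartogs variable and forces the $\Omega$-dependent factor to be constant on $\Omega$, which by the structure of the Hua polynomial and the fact that $N_\Omega(z,\overline z)=(V(\Omega)K_\Omega)^{-1/p}$ is non-constant unless $d=1$, pins down $\Omega=\mathbb B^d$ and then $\mu=1$ (matching the radius $1-\|z\|^2$ of the ball so that the domain becomes $\mathbb B^{d+d_0}$); here one uses Theorem \ref{Th:01}'s precursor Theorem \ref{Th:00}/\ref{Th:01} only as orientation, the actual argument being that any deviation from $(\mathbb B^d,1)$ leaves a genuinely non-constant factor. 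Once $\Omega^{{\mathbb{B}}^{d_0}}(\mu)=\mathbb B^{d+d_0}$, the problem collapses to Theorem \ref{Th:00} (equivalently the displayed consequence for $g_{hyp}$) applied to $\mathbb B^{d+d_0}$, which has genus $p=d+d_0+1$, giving balancedness iff $\alpha>\frac{p-1}{p}\cdot(d+d_0+1)=d+d_0$ in the $g_{hyp}$ normalization.

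For the converse I would simply check that when $\Omega=\mathbb B^d$, $\mu=1$, the canonical metric $g(\mu)$ is exactly $g_{hyp}$ on $\mathbb B^{d+d_0}$ (both are $-\partial\overline\partial\ln(1-\|z\|^2-\|w\|^2)$ up to the $\frac{\sqrt{-1}}{2\pi}$ normalization), so the already-stated characterization for $g_{hyp}$ yields balancedness precisely for $\alpha>d+d_0$; in particular $\mathcal H_\alpha\ne\{0\}$ and $\varepsilon_\alpha$ is a finite positive constant in that range, and $\varepsilon_\alpha$ is identically $+\infty$ (i.e. $\mathcal H_\alpha=\{0\}$ fails to give a balanced metric in the required sense) for $\alpha\le d+d_0$. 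The main obstacle I anticipate is the bookkeeping in the explicit Bergman-kernel computation: one must correctly assemble the Forelli--Rudin expansion with the right Gamma/Pochhammer factors and the Hua polynomial of the general symmetric domain $\Omega$, and then argue cleanly that the resulting function of $(N_\Omega(z,\overline z),t)$ is constant only in the ball case — the rank-$\ge 2$ (or $d\ge 2$) obstruction to constancy is where the non-homogeneity of the Hartogs construction really enters and needs a careful, case-free argument rather than a term-by-term inspection.
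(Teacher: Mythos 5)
First, a framing remark: the paper does not prove Theorem \ref{Th:01} at all --- it is quoted from Loi--Zedda \cite{LZ} --- but the closest internal analogue is the $k=1$ specialization of Theorem \ref{Th:2}, whose proof (Sections 2--4) follows exactly the first half of your plan: a Forelli--Rudin/Peter--Weyl computation of the weighted Bergman kernel (Theorem \ref{Th:a.2}), a closed formula \eqref{eq2.37} for $\varepsilon_\alpha$, and the reduction of balancedness to the polynomial identity \eqref{eq1.7}. Up to that point your proposal is on track, including the nonemptiness threshold $\alpha>\max\{d+d_0,(p-1)/\mu\}$ and the final appeal to Theorem \ref{Th:00} for the ball in the sufficiency direction.

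The genuine gap is your ``key step''. You propose to let $w\to 0$ so as to isolate an $\Omega$-dependent factor of $\varepsilon_\alpha$ and to argue that its non-constancy on $\Omega$ pins down $\Omega=\mathbb{B}^d$, $\mu=1$. But there is no such factor: the prefactor $N_\Omega(z,\overline z)^{-\mu\alpha}$ of $K_\alpha$ cancels exactly against $e^{-\alpha\Phi}=(N_\Omega(z,\overline z)^\mu-\|w\|^2)^\alpha$, so that $\varepsilon_\alpha$ depends on $(z,w)$ \emph{only} through $t:=\|w\|^2/N_\Omega(z,\overline z)^{\mu}$; explicitly (for $k=1$) it equals $\mu^{-d}\sum_{j=0}^{d}\frac{D^j\widetilde{\chi}(d)}{j!}(\alpha-n)_{j+d_0}(1-t)^{d-j}$ as in \eqref{eq2.37}, which is forced a priori by the isometric automorphisms of Lemma \ref{Le:2.1} acting transitively on each level set of $t$. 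Hence the restriction to $w=0$ is a constant in $z$ for \emph{every} $\Omega$ and $\mu$ and yields no information, and your appeal to the non-constancy of $N_\Omega$ cannot distinguish $\mathbb{B}^d$ from any other $\Omega$ (note $N_{\mathbb{B}^d}=1-\|z\|^2$ is itself non-constant). The actual rigidity mechanism is arithmetic: constancy of the polynomial in $1-t$ forces $D^j\widetilde{\chi}(d)=0$ for $0\le j\le d-1$, i.e.\ $\chi(\mu x-p)=\mu^d\prod_{j=1}^{d}(x-j)$, and one must then match the explicit roots $j/\mu$ of the Hua polynomial (together with the half-integer shifts coming from the multiplicity $a$) against $\{1,\dots,d\}$. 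This root-matching, carried out case by case as in the proof of Theorem \ref{Th:2}(ii)--(iii), eliminates every candidate except $\Omega=\mathbb{B}^d$ with $\mu=1$ (even $\Omega_{III}(2)$ and $\Omega_{IV}(m)$, which survive part (ii), fail here because $3/(2\mu)$, resp.\ $m/(2\mu)$, cannot be an admissible integer root once $1/\mu=1$ is forced). Without this step the necessity direction of the theorem is not established.
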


In 2014, Feng-Tu \cite{FT} obtained the following improvement of
Theorems \ref{Th:01}.

\begin{Theorem}\textup{(Feng-Tu \cite{FT})}\label{Th:02}
{Let $(\Omega^{{\mathbb{B}}^{d_0}}(\mu), g(\mu))$ be a
Cartan-Hartogs domain over the irreducible bounded symmetric domain
$\Omega$ in $\mathbb{C}^d$ with the canonical  metric $g(\mu)$. Then
the  coefficient $a_2$ $($see \eqref{eq1.5}$)$ of the Rawnsley's
$\varepsilon$-function expansion is a constant on
$\Omega^{{\mathbb{B}}^{d_0}}(\mu)$ if and only if
$(\Omega^{{\mathbb{B}}^{d_0}}(\mu), g(\mu))$ is biholomorphically
isometric to the complex hyperbolic space $({{\mathbb{B}}}^{d+d_0},
g_{hyp})$ $($i.e., $\Omega=\mathbb{B}^d,\; \mu=1).$}\end{Theorem}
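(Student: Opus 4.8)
The plan is to compute the scalar curvature $k_g$ and then the full coefficient $a_2$ of the Rawnsley expansion \eqref{eq1.5} for the canonical metric $g(\mu)$ on a Cartan–Hartogs domain $\Omega^{\mathbb{B}^{d_0}}(\mu)$, and to exploit the product/fibration structure of $\Phi$ to isolate the obstruction to $a_2$ being constant. The key observation is that $\Phi(z,w) = -\ln\bigl(N_\Omega(z,\bar z)^\mu - \|w\|^2\bigr)$ has a very special form: writing $x = \|w\|^2 / N_\Omega(z,\bar z)^\mu \in [0,1)$, the Kähler potential depends on $z$ only through $N_\Omega(z,\bar z)$ and the fiber variable only through $x$. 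Because $\Omega$ is an irreducible bounded symmetric domain, $-\ln N_\Omega(z,\bar z) = \frac{1}{p}\ln(V(\Omega)K_\Omega(z,\bar z))$ is (up to the factor $1/p$) a potential for the Bergman metric, which is homogeneous; hence all curvature quantities of the base factor are constant along $\Omega$, and any non-constancy of $a_2$ must come from the interaction between the base and the fiber, governed by the function $N_\Omega$ and the parameter $\mu$.

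First I would set up local holomorphic coordinates adapted to the fibration and compute the metric matrix $(g_{i\bar j})$ of $g(\mu)$ in block form: a $d\times d$ block in the $z$-directions, a $d_0\times d_0$ block in the $w$-directions, and off-diagonal blocks. Using the homogeneity of $\Omega$ one may evaluate everything at a convenient point (e.g. $z=0$, and $w$ along a coordinate axis), reducing the curvature computation to an essentially one- or two-variable problem in $N := N_\Omega(z,\bar z)$ and $x$. Then I would compute $k_g$, $|\mathrm{Ric}|^2$, $|R|^2$ and $\triangle k_g$ in terms of $N$, $x$, the rank $r$, multiplicities $a,b$, genus $p$ of $\Omega$, and the exponents $\mu, d_0, d$. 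Substituting into the formula $a_2 = \frac{1}{3}\triangle k_g + \frac{1}{24}|R|^2 - \frac{1}{6}|\mathrm{Ric}|^2 + \frac{1}{8}k_g^2$ from \eqref{eq1.6} yields an explicit function of $N$ and $x$ (with the stated parameters). The forward direction ($\Omega = \mathbb{B}^d$, $\mu=1$ implies $a_2$ constant) is then immediate: in that case $\Omega^{\mathbb{B}^{d_0}}(\mu) = \mathbb{B}^{d+d_0}$ with $g(\mu) = g_{hyp}$, which is a homogeneous metric of constant holomorphic sectional curvature, so all curvature invariants — and hence $a_2$ — are constant.

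The substance is the converse. Assuming $a_2$ is constant on $\Omega^{\mathbb{B}^{d_0}}(\mu)$, I would extract the constraints this imposes. The strategy is to regard $a_2$ as a function of the two independent quantities $N \in (0,1]$ and $x \in [0,1)$ (independent because $z$ and $w$ range freely subject only to $\|w\|^2 < N^\mu$) and force all non-constant terms to vanish. Differentiating in $x$ (equivalently, moving along the fiber) kills the purely base-dependent part and leaves polynomial-in-$x$, rational-in-$(1-x)$ identities whose coefficients must vanish; these will pin down $\mu$. Differentiating in $N$ (equivalently, moving in the base, using that $-\ln N$ is a Bergman-type potential so its Hessian and higher derivatives are controlled by $a,b,r,p$) will force the base to have the curvature profile of the ball, i.e. $r=1$, equivalently $\Omega = \mathbb{B}^d$. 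Once $\Omega = \mathbb{B}^d$ is forced, $N_\Omega(z,\bar z) = 1-\|z\|^2$, $p = d+1$, and the remaining condition on $\mu$ should collapse to $\mu = 1$; then $\Phi = -\ln(1 - \|z\|^2 - \|w\|^2/\cdot)$... more precisely one checks directly that $\mu=1$ gives $\Phi = -\ln(1 - \|z\|^2 - \|w\|^2)$, the potential of $g_{hyp}$ on $\mathbb{B}^{d+d_0}$, and appeals to the rigidity of such a biholomorphic isometry.

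The main obstacle I anticipate is organizing the curvature computation so that the dependence on the bounded symmetric domain $\Omega$ enters only through its numerical invariants $(r,a,b,p)$ and the single function $N_\Omega$ — this requires using the homogeneity of $\Omega$ and known formulas (e.g. from Faraut–Korányi / Hua) for the Bergman metric and its curvature on irreducible bounded symmetric domains, so that one never has to treat the six types case by case. A secondary difficulty is the bookkeeping in separating the $N$- and $x$-dependence of $a_2$: the terms $\triangle k_g$ and $|R|^2$ are the messiest, and it will be important to arrange the algebra (perhaps by first showing $k_g$ itself is a simple explicit function of $x$ alone when $\Omega=\mathbb{B}^d,\mu=1$, and measuring the deviation from that baseline in the general case) so that the vanishing conditions become transparent. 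Once the computation is in hand, the rigidity conclusion follows from Theorem~\ref{Th:01}-type arguments already available, so the entire weight of the proof rests on the explicit evaluation of $a_2$ and the clean extraction of the two constraints $r=1$ and $\mu=1$.
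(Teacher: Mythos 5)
Your plan diverges from the route actually taken here and in \cite{FT}: the paper never computes $\triangle k_g$, $|R|^2$ or $|Ric|^2$. Instead it computes the reproducing kernel of $\mathcal{H}_{\alpha}$ in closed form via the Faraut--Kor\'anyi expansion (Theorem \ref{Th:a.2}), obtains $\varepsilon_{\alpha}$ as an exact polynomial in $\alpha$ whose coefficients are polynomials in $1-\|w\|^2/N_{\Omega}(z,\bar z)^{\mu}$ (Theorem \ref{Th:2.3}), reads off $a_1,a_2$ (Corollary \ref{Co:2.1}), and converts ``$a_2$ constant'' into the two algebraic identities $D^{d-1}\widetilde{\chi}(d)=D^{d-2}\widetilde{\chi}(d)=0$, analyzed through Lemmas \ref{Le:2.8}--\ref{Le:2.9} and estimates of the type in Lemma \ref{Le:3.1}. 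Your curvature route through \eqref{eq1.6} is legitimate in principle but far heavier, and, as organized, it contains a genuine gap.

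The gap is structural: you treat $a_2$ as a function of ``two independent quantities'' $N=N_{\Omega}(z,\bar z)$ and $x=\|w\|^2/N^{\mu}$, and you assign to the ``$N$-differentiation'' the job of forcing $\Omega=\mathbb{B}^d$. But $a_2$ does not depend on $N$ at all: by Lemma \ref{Le:2.1} the automorphisms of $\Omega$ lift to isometries of $(\Omega^{\mathbb{B}^{d_0}}(\mu),g(\mu))$ carrying any point to one with $z=0$ and the same value of $x$, so every curvature invariant --- hence $a_2$ via \eqref{eq1.6} --- is a function of $x$ alone (compare \eqref{eq2.37}, which depends only on $1-x$). The $N$-derivative is identically zero for every $\Omega$ and every $\mu$, so the mechanism you designate for extracting $r=1$ does not exist. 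All constraints must come from the $x$-expansion: constancy in $x$ yields exactly two conditions (vanishing of the coefficients of $(1-x)$ and $(1-x)^2$), the first forcing $\mu=p/(d+1)$ and the second an exact numerical identity for the invariant $R(\Omega)$ of \eqref{eq2.51.1}. The real work of the converse --- showing, essentially by running through the six types of irreducible bounded symmetric domains or by sharp two-sided bounds on $S(\Omega)=2R(\Omega)/(d^2p^2)$ as in Lemma \ref{Le:3.1} together with an equality analysis, that these two identities admit no solution other than $r=1$ and $\mu=1$ --- is entirely absent from your plan. With the $N$-step removed and that analysis missing, your argument establishes only the easy forward implication.
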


By Ligocka \cite{L2}, the Bergman kernel of the Hartogs type domain
$\big(\prod_{j=1}^k\Omega_j\big)^{{\mathbb{B}}^{d_0}}(\mu)$ can be
expressed as infinite sum in terms of the weighted Bergman kernels
of the base space $\prod_{j=1}^k\Omega_j$ with weights
$(\prod_{j=1}^kN_j(z_j,\overline{z_j})^{\mu_j})^m$ $(0\leq m<
+\infty)$. Ahn-Park \cite{AP} use the technique in Ligocka \cite{L2}
to obtain an explicit form of the Bergman kernel of the Hilbert
space of square integrable holomorphic functions on the generalized
Cartan-Hartogs domain
$\big(\prod_{j=1}^k\Omega_j\big)^{{\mathbb{B}}^{d_0}}(\mu)$ and
determine the condition that their Bergman kernel functions have
zeros in 2012.

By studying the explicit solutions of a class of complex
Monge-Amp\`ere equations on generalized Cartan-Hartogs domain
$\big(\prod_{j=1}^k\Omega_j\big)^{{\mathbb{B}}^{d_0}}(\mu)$,
Wang-Hao \cite{WH} described K\"ahler-Einstein metrics on such
domain in 2014.

Each generalized Cartan-Hartogs domain
$\big(\prod_{j=1}^k\Omega_j\big)^{{\mathbb{B}}^{d_0}}(\mu)$ $(k\geq
2)$ is a bounded nonhomogeneous domain. The purpose of this paper is
to obtain the existence of balanced metrics for such bounded
nonhomogeneous domains. In this paper, we obtain an explicit formula
for the Bergman kernel of the weighted Hilbert space
$\mathcal{H}_{\alpha}$ of square integrable holomorphic functions on
$\big(\big(\prod_{j=1}^k\Omega_j\big)^{{\mathbb{B}}^{d_0}}(\mu),
g(\mu)\big)$ with the weight $\exp\{-\alpha \Phi\}$ (where $\Phi$ is
a globally defined K\"{a}hler potential for the canonical metric
$g(\mu)$) for $\alpha>0$. Furthermore, we give an explicit
expression of the Rawnsley's $\varepsilon$-function expansion for
$\big(\big(\prod_{j=1}^k\Omega_j\big)^{{\mathbb{B}}^{d_0}}(\mu),
g(\mu)\big)$, and, use the asymptotics of the Rawnsley's
$\varepsilon$-function to draw necessary and sufficient conditions
for the metric  $\alpha g(\mu)$ on the generalized Cartan-Hartogs
domain $\big(\prod_{j=1}^k\Omega_j\big)^{{\mathbb{B}}^{d_0}}(\mu)$
to be balanced as follows.

\begin{Theorem} \label{Th:2} {Let $\Omega_i\subset \mathbb{C}^{d_i}$ be an irreducible bounded
symmetric domain, and denote  the rank $r_i$, the characteristic
multiplicities $a_i, b_i$, the dimension $d_i$ and the genus $p_i$
for $\Omega_i$ $(1\leq i \leq k)$. For given positive integer $d_0$,
positive real numbers $\mu_i$ $(1\leq i\leq k)$, let $g(\mu)$ be the
canonical  metric on the generalized Cartan-Hartogs domain
$\big(\prod_{j=1}^k\Omega_j\big)^{{\mathbb{B}}^{d_0}}(\mu)$. Then we
have the results as follows.

$(i)$ The metric  $\alpha g(\mu)$ on the generalized Cartan-Hartogs
domain $\big(\prod_{j=1}^k\Omega_j\big)^{{\mathbb{B}}^{d_0}}(\mu)$
is balanced if and only if
 $$\alpha>\max\left\{\sum_{j=0}^kd_j,\frac{p_1-1}{\mu_1},\ldots,\frac{p_k-1}{\mu_k}\right\}$$
 and
\begin{equation}\label{eq1.7}
   \prod_{i=1}^k \prod_{j=1}^{r_i}\left(\mu_i
   x-p_i+1+(j-1)\frac{a_i}{2}\right)_{1+b_i+(r_i-j)a_i}=\prod_{j=1}^k\mu_j^{d_j}\cdot\prod_{j=1}^{d}(x-j),
\end{equation}
where $d=\sum_{j=1}^kd_j$ and
$(x)_m=\frac{\Gamma(x+m)}{\Gamma(x)}=x(x+1)(x+2)\cdots(x+m-1)$.

$(ii)$ If the metric  $\alpha g(\mu)$ on the domain
$\big(\prod_{j=1}^k\Omega_j\big)^{{\mathbb{B}}^{d_0}}(\mu)$  is
balanced, then each $\Omega_j$ $(1\leq j\leq k)$ must be
biholomorphic to
\begin{equation*}
 \Omega_{I}(1,n)\equiv {\mathbb{B}}^n:=\left\{z\in
 \mathbb{C}^{n}:\|z\|^2<1\right\},\;\;
 \Omega_{III}(2):=\left\{z\in \mathcal{M}_{2,2}: z^t=z, I-z{\overline z}^t>0\right\} ,
\end{equation*}
or
\begin{equation*}
  \Omega_{IV}(m):=\left\{z\in \mathbb{C}^{m}: 1-2z{\overline z}^t+|zz^t|^2>0,\|z\|^2<1\right\}~~(m\geq 5~\text{and}~m~\text{are odd}).
\end{equation*}

$(iii)$ For
$\alpha>\max\left\{d_0+d_1+d_2,\frac{p_1-1}{\mu_1},\frac{p_2-1}{\mu_2}\right\}$,
then the metric  $\alpha g(\mu)$ on the domain $\left(\Omega_1
\times \Omega_2 \right)^{{\mathbb{B}}^{d_0}}(\mu)$ is balanced if
and only if
 $(\left(\Omega_1 \times  \Omega_2 \right)^{{\mathbb{B}}^{d_0}}(\mu), \alpha g(\mu))$
 is biholomorphically isometric to
\begin{eqnarray*}
\left( \big({\mathbb{B}}^d\times
 {\mathbb{B}}\big)^{{\mathbb{B}}^{d_0}}\big(1,\frac{1}{d+1}\big),\;  g(1,\frac{1}{d+1})\right)   \;\;
 \mbox{or}   \;\;
\left(\left({\mathbb{B}}\times\Omega_{III}(2)\right)^{{\mathbb{B}}^{d_0}}\big(1,\frac{1}{2}\big),\;
g(1,\frac{1}{2})\right).
\end{eqnarray*}
}\end{Theorem}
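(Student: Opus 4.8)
The plan is to compute the Rawnsley $\varepsilon$-function explicitly and then read off when it is constant. First I would construct an orthonormal basis for the weighted Hilbert space $\mathcal{H}_\alpha$. Because the potential $\Phi = -\ln(\prod_j N_{\Omega_j}(z_j,\overline{z_j})^{\mu_j} - \|w\|^2)$ is circular in $w$ and the base factors are circular bounded symmetric domains, one expands holomorphic $L^2$ functions in a double series: monomials $w^\beta$ in the fiber variable times the Faraut--Kor\'anyi orthogonal decomposition of holomorphic functions on $\prod_j\Omega_j$ into spaces of spherical-type polynomials indexed by signatures $\mathbf m = (\mathbf m^{(1)},\dots,\mathbf m^{(k)})$. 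Integrating over the fiber first (a one-dimensional radial integral producing a Beta-function factor) and then over the base using the Faraut--Kor\'anyi integration formula, the norm of each basis element becomes an explicit product of Gamma factors; here the Pochhammer product $\prod_i\prod_{j=1}^{r_i}(\mu_i x - p_i + 1 + (j-1)a_i/2)_{1+b_i+(r_i-j)a_i}$ on the left of \eqref{eq1.7} enters as the ratio of the Gindikin Gamma function of $\prod_j\Omega_j$ at a shifted argument to its value at the unshifted argument, evaluated at $x = \alpha - (\text{fiber index})$. Summing $|f_j(z,w)|^2 / \|f_j\|^2$ over the basis and using the transformation behavior of the generic norms $N_{\Omega_j}$ under $\mathrm{Aut}(\Omega_j)$, together with the homogeneity in $w$, collapses the double series into a product of a power of $(\prod_j N_{\Omega_j})^{\mu_j} - \|w\|^2$ times a single-variable hypergeometric-type sum in the quantity $t := \|w\|^2 / \prod_j N_{\Omega_j}(z_j,\overline{z_j})^{\mu_j}$.

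Having this closed form, I would multiply by $\exp\{-\alpha\Phi\} = (\prod_j N_{\Omega_j}^{\mu_j} - \|w\|^2)^\alpha$ to get $\varepsilon_\alpha(z,w)$. The claim is that $\varepsilon_\alpha$ is constant on the whole domain precisely when the resulting sum in $t$ telescopes to a constant multiple of $(1-t)^{-\text{something}}$ that exactly cancels the remaining factor. This happens iff the ratio of consecutive coefficients in the $t$-series matches that of a binomial series, which after simplification is exactly the polynomial identity \eqref{eq1.7} in $x = \alpha$ — an equality of two degree-$d$ monic-up-to-constant polynomials, forcing equality of all coefficients. The convergence/positivity of the underlying Hilbert space and of the series forces the lower bounds $\alpha > \sum_j d_j$ (so that $\mathcal{H}_\alpha\neq\{0\}$ and the fiber integral converges) and $\alpha > (p_i-1)/\mu_i$ for each $i$ (so that each base integral converges — this is the direct analog of the $\alpha > (p-1)/p$ threshold in Theorem \ref{Th:00}). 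This gives part $(i)$.

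For part $(ii)$, I would analyze the polynomial identity \eqref{eq1.7} combinatorially. The right side $\prod_j\mu_j^{d_j}\prod_{j=1}^d(x-j)$ has only \emph{simple} roots (the integers $1,\dots,d$), so the left side must too. But the left side is a product, over each factor $\Omega_i$, of Pochhammer symbols; each such Pochhammer block $(\mu_i x - p_i + 1 + (j-1)a_i/2)_{1+b_i+(r_i-j)a_i}$ contributes a run of roots in arithmetic progression with common difference $1/\mu_i$, and these runs overlap unless $r_i$ and the multiplicities $a_i,b_i$ are very restricted. Requiring all roots of the left side to be real, simple, and to coincide with $\{1,\dots,d\}$ — in particular the multiset of roots must have the right gaps and total degree $d_i = \dim\Omega_i$ for the $\Omega_i$-block — eliminates every irreducible type except the three listed: the ball $\mathbb B^n$ (rank $1$, where the block is a single clean Pochhammer $(\mu_i x - n)_n$), $\Omega_{III}(2)$, and $\Omega_{IV}(m)$ with $m\geq 5$ odd. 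This is a finite case check against the list of numerical invariants $(r,a,b,p,d)$ given in the introduction; the main obstacle is organizing it cleanly — in particular ruling out higher-rank domains requires showing that the interlacing Pochhammer runs cannot avoid producing a repeated root, and handling the genus/dimension arithmetic for types $II$, $V$, $VI$ and for $\Omega_{IV}(m)$ with $m$ even.

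Part $(iii)$ is then the $k=2$ specialization. With two factors, part $(ii)$ restricts $\Omega_1,\Omega_2$ to the three types, and one checks \eqref{eq1.7} case by case for each of the finitely many pairs; only two configurations satisfy it — $(\Omega_1,\Omega_2,\mu) = (\mathbb B^d,\mathbb B,(1,\tfrac1{d+1}))$ and $(\mathbb B,\Omega_{III}(2),(1,\tfrac12))$ — and in each the parameters are forced. Having matched parameters, I would invoke uniqueness: a balanced metric realizes $(D,\alpha g(\mu))$ isometrically in $(\mathbb P^N(\mathbb C), ds_{FS})$ via the coherent states map, and Calabi's rigidity for K\"ahler immersions into projective space (together with the fact that the identity of the $\varepsilon$-functions forces the identity of the Bergman-type potentials) upgrades the parameter match to a biholomorphic isometry onto the stated model domains. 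I expect the polynomial/combinatorial elimination in $(ii)$ to be the real heart of the proof; everything before it is a (lengthy but mechanical) orthogonal-basis computation, and everything after it is bookkeeping plus an appeal to rigidity.
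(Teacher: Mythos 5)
Your proposal follows essentially the same route as the paper: the Peter--Weyl/Faraut--Kor\'anyi decomposition of $\mathcal{H}_\alpha$ gives the closed form of $K_\alpha$ and hence of $\varepsilon_\alpha$, constancy of $\varepsilon_\alpha$ reduces to the polynomial identity \eqref{eq1.7} (your matching of the $t$-series coefficients against a binomial series is the same condition the paper phrases as the vanishing of the finite differences $D^j\widetilde{\chi}(d)$ for $0\le j\le d-1$), and part (ii) is the same ``no multiple roots'' check over the six types. The one soft spot is part (iii): the configurations to eliminate are not ``finitely many pairs'' (the dimensions $d_1,d_2$ and the parameters $\mu_1,\mu_2$ range over infinite sets), and the actual elimination requires showing the $1/\mu_i$ are integers (resp.\ even integers for $\Omega_{III}(2)$ and $\Omega_{IV}$), bounding them, and counting even versus odd roots on both sides of \eqref{eq1.7}; also, once the types and the $\mu_i$ are pinned down the domain coincides with the stated model, so the appeal to Calabi rigidity is unnecessary.
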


\vskip 5pt
 For example, for $\Omega_{IV}(5)\times\prod_{j=1}^{3}{\mathbb{B}}$ and
 $\mu=\left(\frac{1}{2},1,\frac{1}{3},\frac{1}{7}\right)$,
 since
\begin{eqnarray*}
   & &\prod_{i=1}^4 \prod_{j=1}^{r_i}\left(\mu_i
   x-p_i+1+(j-1)\frac{a_i}{2}\right)_{1+b_i+(r_i-j)a_i}  \\
   &=&\prod_{j=1}^4\left(\mu_1x-j\right)\cdot\big(\mu_1x-\frac{5}{2}\big)\prod_{j=2}^4(\mu_jx-1)  \\
   &=&\frac{1}{2^5\times 3\times
  7}\prod_{j=1}^8(x-j),
\end{eqnarray*}
by Theorem \ref{Th:2}(i), when $\alpha>8+d_0$, the metric  $\alpha
g(\mu)$ on the generalized Cartan-Hartogs domain
$$\big(\Omega_{IV}(5)\times\prod_{j=1}^{3}{\mathbb{B}}\big)^{{\mathbb{B}}^{d_0}}\big(\frac{1}{2},1,\frac{1}{3},\frac{1}{7}\big)$$
is balanced, where $g(\mu)$ is the canonical  metric. So the example
complements Theorem \ref{Th:2}(ii).

As a supplement to our main result, we will prove the following
result.

\begin{Theorem}\label{Th:1}{
Let $\Omega_i\subset \mathbb{C}^{d_i}$ be an irreducible bounded
symmetric domain $(i=1,2)$. For any positive integer $d_0$, then
there exist positive numbers $\mu_1, \mu_2$ such that the
coefficient $a_2$ $($see \eqref{eq1.5}$)$ of the Rawnsley's
$\varepsilon$-function expansion of
$$\left((\Omega_1\times\Omega_2)^{{\mathbb{B}}^{d_0}}(\mu_1,\mu_2),
g(\mu_1,\mu_2)\right)$$ is a constant. }\end{Theorem}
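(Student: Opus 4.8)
The plan is to read $a_2$ off the exact formula for the Rawnsley $\varepsilon$-function of a generalized Cartan--Hartogs domain that underlies Theorem~\ref{Th:2}, convert the requirement ``$a_2$ is constant'' into two polynomial equations in $\mu_1,\mu_2$, and then exhibit a positive solution. Write $d_i=\dim_{\mathbb C}\Omega_i$, $d=d_1+d_2$, $n=d_0+d$, and $t=\|w\|^2\prod_{j=1}^2 N_{\Omega_j}(z_j,\overline{z_j})^{-\mu_j}\in[0,1)$.

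First I would note that the automorphisms generated by the lifts of $\mathrm{Aut}(\Omega_1)\times\mathrm{Aut}(\Omega_2)$ and by $U(d_0)$ fix $\Phi$ up to a pluriharmonic term and act transitively on the level sets of $t$, so $\varepsilon_\alpha$ — and with it every $a_j$ in \eqref{eq1.5} — is a function of $t$ only. Expanding the weighted Bergman kernel as a series in the $w$-monomials and evaluating the weighted Bergman kernels of the two symmetric factors by the Hua integral gives the closed form
$$\varepsilon_\alpha(t)=D(\alpha)\sum_{i=0}^{d} c_{d-i}\,(\alpha-d)_{d-i}\,(1-t)^{\,i},$$
where $D(\alpha)$ does not depend on $t$ and $c_0,\dots,c_d$ are the coefficients of the genus polynomial $H(x):=\prod_{i=1}^2\prod_{j=1}^{r_i}\bigl(\mu_i x-p_i+1+(j-1)\tfrac{a_i}{2}\bigr)_{1+b_i+(r_i-j)a_i}$ (the left-hand side of \eqref{eq1.7}, of degree $d$ and leading coefficient $\mu_1^{d_1}\mu_2^{d_2}$) written in the Newton basis $H(x)=\sum_{i=0}^{d}c_{d-i}\prod_{r=i+1}^{d}(x-r)$. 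The coefficient of $(1-t)^i$ has degree at most $n-i$ in $\alpha$, so each $a_l(t)$ is a polynomial of degree $\le l$ in $1-t$; in particular $a_2(t)=a_2^{(0)}+a_2^{(1)}(1-t)+a_2^{(2)}(1-t)^2$ with $a_2^{(2)}=D_0\,c_{d-2}$ and $a_2^{(1)}=-D_0\bigl(\tfrac{n(n+1)}{2}-1\bigr)c_{d-1}$ for a nonzero constant $D_0$. Hence $a_2$ is constant iff $c_{d-1}=c_{d-2}=0$, and since this says the $x^{d-1}$- and $x^{d-2}$-coefficients of $H/(\mu_1^{d_1}\mu_2^{d_2})$ match those of $\prod_{l=1}^{d}(x-l)$, it is equivalent to requiring that the multiset $\{\rho_1,\dots,\rho_d\}$ of roots of $H$ satisfy $\sum_k\rho_k=\sum_{l=1}^d l$ and $\sum_k\rho_k^2=\sum_{l=1}^d l^2$.

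Next I would make the roots explicit: they are $\mu_1^{-1}R_1\sqcup\mu_2^{-1}R_2$, where $R_i$ is the union, over $1\le j\le r_i$, of the unit-step blocks running from $1+(j-1)\tfrac{a_i}{2}$ to $p_i-1-(j-1)\tfrac{a_i}{2}$. Each block is symmetric about $p_i/2$, so $\sum_{\rho\in R_i}\rho=\tfrac{d_ip_i}{2}$, while adding the block variances $\tfrac1{12}N_{ij}(N_{ij}^2-1)$ with $N_{ij}=1+b_i+(r_i-j)a_i$ gives $\sigma_i:=\sum_{\rho\in R_i}\rho^2=\tfrac{d_ip_i^2}{4}+\tfrac1{12}\sum_{j=1}^{r_i}(N_{ij}^3-N_{ij})$. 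Setting $u=\mu_1^{-1}$, $v=\mu_2^{-1}$, the two conditions become a line and an origin-centered ellipse,
$$\text{(I)}\quad \tfrac{d_1p_1}{2}\,u+\tfrac{d_2p_2}{2}\,v=\tfrac{d(d+1)}{2}, \qquad \text{(II)}\quad \sigma_1 u^2+\sigma_2 v^2=\tfrac{d(d+1)(2d+1)}{6},$$
so the theorem reduces to showing that the segment $L$ cut off by (I) in the closed first quadrant meets the ellipse $E$ of (II) at a point with $u,v>0$. Putting $\phi(u,v):=\sigma_1u^2+\sigma_2v^2$ and letting $(u_*,0),(0,v_*)$ be the axis-endpoints of $L$, an endpoint lies on or inside $E$ precisely when $\tfrac{(\sum R_i)^2}{\sigma_i}\ge\tfrac{3d(d+1)}{2(2d+1)}$; because $\tfrac{(\sum R_i)^2}{\sigma_i}\le d_i$ by Cauchy--Schwarz, $d_1+d_2=d$, and $\tfrac{3d(d+1)}{2(2d+1)}>\tfrac d2$, at least one endpoint of $L$ lies strictly outside $E$. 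If the other endpoint lies strictly inside $E$, the intermediate value theorem for $\phi$ along $L$ produces an interior intersection. Otherwise both endpoints lie on or outside $E$, and it is enough to show $\min_L\phi<\tfrac{d(d+1)(2d+1)}{6}$; a Lagrange-multiplier computation reduces this to $\tfrac{(\sum R_1)^2}{\sigma_1}+\tfrac{(\sum R_2)^2}{\sigma_2}>\tfrac{3d(d+1)}{2(2d+1)}$, which follows from the strict super-additivity of $m\mapsto\tfrac{3m(m+1)}{2(2m+1)}$ together with the per-factor bound $\tfrac{(\sum R_i)^2}{\sigma_i}\ge\tfrac{3d_i(d_i+1)}{2(2d_i+1)}$. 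In either case $\phi$ attains $\tfrac{d(d+1)(2d+1)}{6}$ at an interior point of $L$, and $(\mu_1,\mu_2)=(1/u,1/v)$ is the pair we want.

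Finally I would establish the per-factor bound, which (using $\sigma_i$ and $\sum_{\rho\in R_i}\rho=\tfrac{d_ip_i}{2}$) is equivalent to $\sum_j(N_{ij}^3-N_{ij})\le\tfrac{d_ip_i^2(d_i-1)}{d_i+1}$: since $\sum_j N_{ij}=d_i$ and $\max_j N_{ij}=N_{i1}=p_i-1$, one has $\sum_j N_{ij}^3\le(p_i-1)^2 d_i$, hence $\sum_j(N_{ij}^3-N_{ij})\le d_ip_i(p_i-2)$, and $d_ip_i(p_i-2)\le\tfrac{d_ip_i^2(d_i-1)}{d_i+1}$ is equivalent to $p_i\le d_i+1$, which holds for every irreducible bounded symmetric domain (with equality exactly when $\Omega_i$ is a ball). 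I expect the main obstacle to be this third step: recognizing that the two conditions collapse to a line and an origin-centered ellipse, and then checking — with no hypothesis on $\Omega_1,\Omega_2$ beyond $d_1+d_2=d$ — that the borderline positions of $L$ relative to $E$ cannot all fail; making the first step fully rigorous (the passage from the exact kernel to the displayed closed form, and the identification of $a_2^{(1)},a_2^{(2)}$ with $c_{d-1},c_{d-2}$) is routine but somewhat lengthy. The point is that the two parameters $\mu_1,\mu_2$ — in contrast to the single parameter available for a Cartan--Hartogs domain, cf.\ Theorem~\ref{Th:02} — are exactly what is needed to satisfy both coefficient conditions at once.
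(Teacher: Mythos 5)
Your proposal is correct, and its first half coincides in substance with the paper's: the paper also reduces ``$a_2$ constant'' to the vanishing of the two top Newton coefficients $\tfrac{D^{d-1}\widetilde{\chi}(d)}{(d-1)!}=\tfrac{D^{d-2}\widetilde{\chi}(d)}{(d-2)!}=0$ (Corollary \ref{Co:2.1} and \eqref{eq4.20}--\eqref{eq4.21}), which is exactly your pair of power-sum conditions on the roots of $\widetilde{\chi}$ written with $e_2$ instead of $p_2$. Where you genuinely diverge is in proving that the resulting system has a solution with $\mu_1,\mu_2>0$. The paper substitutes $x_i=\tfrac{d_ip_i}{A\mu_i}$, solves the quadratic explicitly \eqref{eq4.24}--\eqref{eq4.26}, and verifies positivity of a root via Lemmas \ref{Le:3.1} and \ref{Le:3.2}; the crucial lower bound $S(\Omega)>\tfrac12-\tfrac{8}{3(4d+1)}$ of Lemma \ref{Le:3.1} is checked case by case over all six types of irreducible bounded symmetric domains. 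You instead set up a line-meets-ellipse picture in $(1/\mu_1,1/\mu_2)$ and prove the one inequality that matters, $\tfrac{(\sum R_1)^2}{\sigma_1}+\tfrac{(\sum R_2)^2}{\sigma_2}>\tfrac{3d(d+1)}{2(2d+1)}$ --- which, after unwinding $\tfrac{1}{1-2S(\Omega_i)}=\tfrac{d_i^2p_i^2}{4\sigma_i}$, is precisely the paper's \eqref{eq4.17} --- by a classification-free argument: the per-factor bound $\tfrac{(\sum R_i)^2}{\sigma_i}\ge\tfrac{3d_i(d_i+1)}{2(2d_i+1)}$ reduces, via $\sum_jN_{ij}=d_i$ and $\max_jN_{ij}=p_i-1$, to the universal inequality $p_i\le d_i+1$, and then strict super-additivity of $m\mapsto\tfrac{3m(m+1)}{2(2m+1)}$ finishes it. Your per-factor bound is in fact sharper than the paper's type-by-type estimate (it is an equality exactly for balls), and eliminating the six-case verification is a real simplification; the paper's route, on the other hand, yields the explicit solutions \eqref{eq4.24}--\eqref{eq4.25}, which your intermediate-value argument does not. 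Both arguments are complete; the endpoint/degenerate cases in your IVT step (an endpoint of the segment lying exactly on the ellipse) are handled correctly because at least one endpoint is always strictly outside.
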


\begin{Remark} By Theorem \ref{Th:2}(iii) and Theorem \ref{Th:1}, there
exists generalized Cartan-Hartogs domain
$$\left((\Omega_1\times\Omega_2)^{{\mathbb{B}}^{d_0}}(\mu_1,\mu_2),
g(\mu_1,\mu_2)\right)$$ such that the coefficient $a_2$ of its
Rawnsley's $\varepsilon$-function expansion is constant, but the
metric  $\alpha g(\mu_1,\mu_2)$ on
$$(\Omega_1\times\Omega_2)^{{\mathbb{B}}^{d_0}}(\mu_1,\mu_2)$$ is not
balanced for  all $\alpha>0$  (cf. Theorem \ref{Th:02}).
\end{Remark}

For $k\geq 2$, each generalized Cartan-Hartogs domain
$(\big(\prod_{j=1}^k\Omega_j\big)^{{\mathbb{B}}^{d_0}}(\mu), g(\mu)
\big)$ is a noncompact, complete (the argument is analogous to
\cite{YW}) and nonhomogeneous K\"{a}hler manifold. By studying the
explicit solutions of a class of complex Monge-Amp\`ere equations on
generalized Cartan-Hartogs domains, Wang-Hao \cite{WH} described
K\"ahler-Einstein metrics on such domains in 2014. From the proof in
Wang-Hao \cite{WH} and \eqref{eq2.6} in this paper, one can state
the result as follows.

\begin{Theorem}\label{Th:4}\textup{(Wang-Hao \cite{WH})} {Let $\Omega_i\subset \mathbb{C}^{d_i}$ be an irreducible bounded
symmetric domain and let $p_i$ and $d_i$ be the genus and the
dimension respectively for $\Omega_i$, $1\leq i \leq k$. Then
$\mu_i=\frac{p_i}{\sum_{j=1}^k d_j+1}$ $(1\leq i \leq k)$ if and
only if the canonical metric $g(\mu)$  on the domain
$\big(\prod_{j=1}^k\Omega_j\big)^{{\mathbb{B}}^{d_0}}(\mu)$ is
K\"{a}hler-Einstein.
 }\end{Theorem}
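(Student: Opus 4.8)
The plan is to reduce the Kähler--Einstein condition for $g(\mu)$ on $\big(\prod_{j=1}^k\Omega_j\big)^{{\mathbb{B}}^{d_0}}(\mu)$ to an identity among the defining data of the base domains, and to solve that identity. First I would compute the Ricci form of $g(\mu)$ explicitly. Since $\omega(\mu)=\frac{\sqrt{-1}}{2\pi}\partial\overline{\partial}\Phi$ with $\Phi=-\ln\big(\prod_{j=1}^k N_{\Omega_j}(z_j,\overline{z_j})^{\mu_j}-\|w\|^2\big)$, one needs the determinant of the complex Hessian $\big(\partial^2\Phi/\partial Z_i\partial\overline{Z_j}\big)$. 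The structure of the problem is a Hartogs-type fibration, so I would exploit the fibre variable $w\in\mathbb{B}^{d_0}$: writing $T:=\prod_{j=1}^k N_{\Omega_j}(z_j,\overline{z_j})^{\mu_j}$ and doing the block computation in the $w$-directions first, the Hessian determinant factors as $c\cdot (T-\|w\|^2)^{-(d_0+1)}\cdot T^{\text{something}}\cdot \prod_j (\det \text{(Hessian of }-\mu_j\ln N_{\Omega_j}))$, using that $-\ln N_{\Omega_j}=\frac{1}{p_j}\ln(V(\Omega_j)K_{\Omega_j})$ is a Kähler potential for a rescaling of the Bergman metric on $\Omega_j$, whose Ricci form equals $p_j$ times its Kähler form (the defining property of the genus). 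This is essentially formula \eqref{eq2.6} referenced in the statement, so I would quote it; the upshot is
\begin{equation*}
\mathrm{Ric}(\omega(\mu)) = (d_0+1)\,\omega(\mu) + \sum_{j=1}^k\Big(p_j-\mu_j(d_0+1)\Big)\cdot\frac{\sqrt{-1}}{2\pi}\partial\overline{\partial}\big(-\ln N_{\Omega_j}\big),
\end{equation*}
where I am using $n=\sum_{j=0}^k d_j$ and the homogeneity of each $\Omega_j$ so that $\partial\overline\partial(-\ln N_{\Omega_j})$ is (a positive multiple of) the Bergman Kähler form of $\Omega_j$.

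Next, the Kähler--Einstein condition is $\mathrm{Ric}(\omega(\mu)) = \lambda\,\omega(\mu)$ for some constant $\lambda$. Comparing with the displayed formula, one direction is immediate: if $\mu_j = p_j/(\sum_{i=1}^k d_i+1)$ for all $j$, I would first observe this forces a compatibility — actually the clean substitution is $\mu_j(d_0+1)=p_j$ would give $\lambda=d_0+1$, but the claimed value is $\mu_j=p_j/(n - d_0 +1)=p_j/(\sum_{j=1}^k d_j+1)$, so I need to be more careful and also track the $\omega(\mu)$-part coming from the $\partial\overline\partial(-\ln T)$ piece inside $\omega(\mu)$ itself. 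Concretely, write $\omega(\mu)=\frac{\sqrt{-1}}{2\pi}\partial\overline\partial\Phi$ and decompose $\partial\overline\partial\Phi$ along the fibre and base: the base part of $\omega(\mu)$ is not simply $\sum_j \mu_j\partial\overline\partial(-\ln N_{\Omega_j})$ because of the $\|w\|^2$ coupling, so the correct bookkeeping requires expressing everything through $\Phi$. The right way is: the Monge--Ampère equation $\det\big(\partial^2\Phi/\partial Z\partial\overline Z\big)=e^{\lambda\Phi}\cdot(\text{const})$ is what Wang--Hao solve, and their solution (as recalled in \eqref{eq2.6}) shows the left side equals $\big(\prod_j \mu_j^{d_j}\big)\cdot c\cdot e^{(\sum_{j=1}^k d_j + 1 + d_0)\Phi}\cdot\prod_j \big(V(\Omega_j)K_{\Omega_j}\big)^{\,(\mu_j(n+1)-p_j)/p_j}$ or some such expression; I would extract the exponent on each $V(\Omega_j)K_{\Omega_j}$ and note it vanishes for every $j$ precisely when $\mu_j(n+1)=p_j$ — i.e. $\mu_j=p_j/(n+1)$ with $n+1=\sum_{j=0}^k d_j+1$; re-examining, since the $w$-block contributes $d_0$ to the count, the surviving exponent is $\mu_j(\sum_{i=1}^k d_i +1)-p_j$, yielding $\mu_j=p_j/(\sum_{i=1}^k d_i+1)$.

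For the converse, suppose $g(\mu)$ is Kähler--Einstein with Einstein constant $\lambda$. Then $\det\big(\partial^2\Phi/\partial Z\partial\overline Z\big)=C e^{\lambda\Phi}$ identically. I would compute the left-hand side independently via the block/Hartogs reduction above and match: the $\Phi$-exponent on both sides must agree, forcing $\lambda=\sum_{j=1}^k d_j+1+d_0 = n+1$ is not right either — rather $\lambda$ is pinned down by the power of $(T-\|w\|^2)=e^{-\Phi}$ appearing, which is $d_0+1+\sum(\cdots)$, and then the residual factors $\prod_j \big(V(\Omega_j)K_{\Omega_j}(z_j,\overline{z_j})\big)^{e_j}$ with $e_j = (\mu_j(\sum_{i=1}^k d_i+1)-p_j)/p_j$ must be constant on $\prod\Omega_j$; since $V(\Omega_j)K_{\Omega_j}$ is a nonconstant real-analytic function on $\Omega_j$ (it equals $1$ only at $0$), this forces $e_j=0$, hence $\mu_j=p_j/(\sum_{i=1}^k d_i+1)$ for each $j$. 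The main obstacle — and the step I would spend the most care on — is the exact block-determinant computation of the complex Hessian of $\Phi$, including correctly identifying the exponent of each $V(\Omega_j)K_{\Omega_j}$ and of $e^{-\Phi}$; this is where the genus $p_j$ and the invariance of the Bergman metric of $\Omega_j$ under its automorphism group (needed so that $\det(\text{Bergman Hessian})$ is itself a constant multiple of a power of $K_{\Omega_j}$) enter decisively, and it is precisely the content of \eqref{eq2.6} which I would invoke rather than rederive, then read off the vanishing-exponent condition to conclude.
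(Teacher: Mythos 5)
Your route---invoke \eqref{eq2.6} and read off the exponents of the generic norms---is exactly what the paper intends (it gives no independent proof, deferring to Wang--Hao and \eqref{eq2.6}), and your final condition $\mu_j\big(\sum_{i=1}^k d_i+1\big)=p_j$ is the correct one. Be aware, though, that the Ricci formula you display is wrong in both coefficients: from \eqref{eq2.6} one has $\det g = C\,\prod_j N_{\Omega_j}^{\mu_j(d+1)-p_j}\,e^{(n+1)\Phi}$ with $d=\sum_{j=1}^k d_j$, hence
\begin{equation*}
\mathrm{Ric}\big(\omega(\mu)\big)=-(n+1)\,\omega(\mu)+\sum_{j=1}^k\big(\mu_j(d+1)-p_j\big)\,\tfrac{\sqrt{-1}}{2\pi}\partial\overline{\partial}\big(-\ln N_{\Omega_j}\big),
\end{equation*}
so it is only your later self-correction that is right; the clean finish is that the $dw\wedge d\overline{w}$ components (absent from every $\partial\overline{\partial}\ln N_{\Omega_j}$) pin the Einstein constant to $-(n+1)$, and positive-definiteness of each $\partial\overline{\partial}(-\ln N_{\Omega_j})$ on its own variable block then forces every coefficient $\mu_j(d+1)-p_j$ to vanish---which also repairs your slightly imprecise claim that the residual factor ``must be constant'' (Kähler--Einstein only forces its logarithm to be pluriharmonic, but that already suffices here).
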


\begin{Remark}
When $\alpha>8+d_0$, the metric  $\alpha
g(\frac{1}{2},1,\frac{1}{3},\frac{1}{7})$ on the generalized
Cartan-Hartogs domain
$$\big(\Omega_{IV}(5)\times\prod_{j=1}^{3}{\mathbb{B}}\big)^{{\mathbb{B}}^{d_0}}\big(\frac{1}{2},1,\frac{1}{3},\frac{1}{7}\big)$$
is balanced by Theorem \ref{Th:2}(i), but is not K\"{a}hler-Einstein
for  all $\alpha>0$ by Theorem \ref{Th:4}, where
$g(\frac{1}{2},1,\frac{1}{3},\frac{1}{7})$ is the canonical  metric.
\end{Remark}

\begin{Remark} The metric $\alpha g(\frac{2}{3},\frac{2}{3})$  $(\alpha>0)$  on the generalized Cartan-Hartogs
domain
\begin{equation*}
   \left(\mathbb{B}\times\mathbb{B}\right)^{\mathbb{B}}(\frac{2}{3},\frac{2}{3})
\end{equation*}
is K\"{a}hler-Einstein by Theorem \ref{Th:4}, but is not balanced
for all $\alpha>0$ by Theorem \ref{Th:2}(iii), where $
g(\frac{2}{3},\frac{2}{3})$ is the canonical metric.
\end{Remark}

The paper is organized as follows. In Section 2, we obtain an
explicit formula for the Bergman kernel $K_{\alpha}$ of the weighted
Hilbert space $\mathcal{H}_{\alpha}$ of square integrable
holomorphic functions on  a generalized Cartan-Hartogs domain
$\big(\big(\prod_{j=1}^k\Omega_j\big)^{{\mathbb{B}}^{d_0}}(\mu),
g(\mu)\big)$ with the weight  $\exp\{-\alpha \Phi\}$, in terms of
ranks, Hua polynomials and generic norms of ${\mathbb{B}}^{d_0}$ and
$\Omega_i$ $(1\leq i \leq k)$. In Section 3, using results in
Section 2, we give the explicit expansion of the Rawnsley's
$\varepsilon$-function of
$\big(\big(\prod_{j=1}^k\Omega_j\big)^{{\mathbb{B}}^{d_0}}(\mu),
g(\mu)\big)$. In Section 4, using the explicit expression of the
Rawnsley's $\varepsilon$-function expansion, we obtain the necessary
and sufficient conditions (Theorem \ref{Th:2} in the paper) for the
metric $\alpha g(\mu)$ on the generalized Cartan-Hartogs domain
$\big(\prod_{j=1}^k\Omega_j\big)^{{\mathbb{B}}^{d_0}}(\mu)$ to be a
balanced metric. In Section 5, we will give the proof of Theorem
\ref{Th:1}.

 \setcounter{equation}{0}
\section{The reproducing kernel of $\mathcal{H}_{\alpha}$ for
$\big(\prod_{j=1}^k\Omega_j\big)^{{\mathbb{B}}^{d_0}}(\mu)$  with
the canonical metric $g(\mu)$ }

Let $\Omega$ be an irreducible bounded symmetric domain  in
$\mathbb{C}^d$ in its Harish-Chandra realization. Thus $\Omega$ is
the open unit ball of a Banach space which admits the structure of a
$JB^{\ast}$-triple. We denote the rank $r$, the characteristic
multiplicities $a,b$, the dimension $d$,  the genus $p$, and the
generic norm $N_{\Omega}(z,\overline{w})$ for $\Omega$. Thus
\begin{equation}\label{1.1}
    d=\frac{r(r-1)}{2}  a+rb+r,\quad   p=(r-1)a+b+2.
\end{equation}
For any $s>-1$, the value of the Hua integral
$\int_{\Omega}N_{\Omega}(z,\overline{z})^s dm(z)$ is given by
\begin{equation}\label{1.2}
\int_{\Omega}N_{\Omega}(z,\overline{z})^s
dm(z)=\frac{\chi(0)}{\chi(s)}\int_{\Omega}dm(z),
\end{equation}
where $dm(z)$ denotes the Euclidean measure on $\mathbb{C}^d$, $\chi$
is the Hua polynomial
\begin{equation}\label{1.3}
   \chi(s):=\prod_{j=1}^r\left(s+1+(j-1)\frac{a}{2}\right
   )_{1+b+(r-j)a},
\end{equation}
in which, for a  non-negative integer $m$, $(s)_m$ denotes the
raising factorial
$$ {(s)_m:=\frac{\Gamma(s+m)}{\Gamma(s)}=s(s+1)\cdots (s+m-1)}.$$

Let $\mathcal{G}$ stand for the identity connected component of the
group of biholomorphic self-maps of $\Omega$, and $\mathcal{K}$ {for
the stabilizer} of the origin in $\mathcal{G}$. Under the action
$f\mapsto f\circ k \; (k\in \mathcal{K})$ of $\mathcal{K}$, the
space $\mathcal{P}$ of holomorphic polynomials on $\mathbb{C}^d$
admits the Peter-Weyl decomposition
$$\mathcal{P}=\bigoplus_{\lambda}\mathcal{P}_{\lambda},$$
 {where the summation is taken over all partitions}
$\lambda$, i.e., $r$-tuples $(\lambda_1, \lambda_2, \cdots,
\lambda_r)$ of nonnegative integers such that $\lambda_1\geq
\lambda_2 \geq \cdots \geq \lambda_r \geq 0$,  {and the spaces}
$\mathcal{P}_{\lambda}$ are $\mathcal{K}$-invariant and irreducible.
For each $\lambda$, $\mathcal{P}_{\lambda} \subset
\mathcal{P}_{|\lambda|}$, where $|\lambda|$ denotes the weight of
partition $\lambda$, i.e., $|\lambda|:=\sum_{j=1}^r \lambda_j$,  and
$\mathcal{P}_{|\lambda|}$ is the space of homogeneous holomorphic
polynomials of degree $|\lambda|$.

Let
\begin{equation}\label{1.4}
    {\langle}f,g {\rangle}_{\mathcal{F}}:=\int_{\mathbb{C}^d}f(z)\overline{g(z)} d\rho_{\mathcal{F}}(z)
\end{equation}
be the Fock-Fischer inner product on the space $\mathcal{P}$ of
holomorphic polynomials on $\mathbb{C}^d$, where
\begin{equation}\label{1.5}
    d\rho_{\mathcal{F}}(z):=\exp\{-m(z,\overline{z})\}
    \frac{\left(\frac{\sqrt{-1}}{2\pi}\partial\overline{\partial}m(z,\overline{z})\right)^d}{d!}
\end{equation}
and $m(z,\overline{z}):=-\left.\frac{\partial \ln
N_{\Omega}(tz,\overline{z})}{\partial
t}\right|_{t=0}=-\left.\frac{\partial
N_{\Omega}(tz,\overline{z})}{\partial t}\right|_{t=0}.$

 For every partition $\lambda$, let $K_{\lambda}(z_1,\overline{z_2})$ be
the reproducing kernel of $\mathcal{P}_{\lambda}$ with respect to
\eqref{1.4}. The weighted Bergman kernel of the weighted Hilbert
space $A^2(\mathbb{C}^d,\rho_{\mathcal{F}})$ of square-integrable
holomorphic functions on $\mathbb{C}^d$ with the measure
$d\rho_{\mathcal{F}}$ is
\begin{equation}\label{1.6}
    K(z_1,\overline{z_2}):=\sum_{\lambda}K_{\lambda}(z_1,\overline{z_2}).
\end{equation}

The kernels $K_{\lambda}(z_1,\overline{z_2})$ are related to the
generic norm  $N_{\Omega}(z_1,\overline{z_2})$ by the
Faraut-{Kor\'{a}nyi} formula
\begin{equation}\label{1.7}
N_{\Omega}(z_1,\overline{z_2})^{-s}=\sum_{\lambda}(s)_{\lambda}K_{\lambda}(z_1,\overline{z_2}),
\end{equation}
where the series converges  {uniformly} on compact subsets of
$\Omega\times\Omega$, $s\in \mathbb{C}$,  in which  $(s)_{\lambda}$
denote the generalized Pochhammer symbol
\begin{equation}\label{1.8}
   (s)_{\lambda}:=\prod_{j=1}^r\big(s-\frac{j-1}{2}a\big)_{\lambda_j}.
\end{equation}
For the proofs of above facts and additional details, we refer,
e.g.,  to \cite{FK}, \cite{FKKLR} and \cite{YLR}.

\begin{Lemma}\label{Le:2.1}{
Let $\Omega_i$ be  an irreducible bounded symmetric domain  in
$\mathbb{C}^{d_i}$ in its Harish-Chandra realization, and denote the
generic norm $N_{\Omega_i}$ and the genus $p_i$ for $\Omega_i$
$(1\leq i \leq k)$. For $z_i^0\in \Omega_i$, let $\phi_i$ be an
automorphism of $\Omega_i$ such that $\phi_i(z^0_i)=0$, $1\leq i\leq
k$. By \cite{WH}, the function
\begin{equation}\label{eq2.1}
   \psi(z_1,\ldots,z_k):=\prod_{i=1}^k\frac{N_{\Omega_i}(z_i^0,\overline{z_i^0})^{\frac{\mu_i}{2}}}{N_{\Omega_i}(z_i,\overline{z_i^0})^{\mu_i}}
\end{equation}
satisfies
\begin{equation}\label{eq2.2}
    |\psi(z_1,\ldots,z_k)|^2=\prod_{i=1}^k\Big(\frac{N_{\Omega_i}(\phi_i(z_i),\overline{\phi_i(z_i)})}{N_{\Omega_i}(z_i,\overline{z_i})}\Big)^{\mu_i}.
\end{equation}
Define the mapping
\begin{equation}\label{eq2.3}
    \begin{array}{rcl}
   F: \big(\prod_{j=1}^k\Omega_j\big)^{{\mathbb{B}}^{d_0}}(\mu_1,\ldots,\mu_k) & \longrightarrow   & \big(\prod_{j=1}^k\Omega_j\big)^{{\mathbb{B}}^{d_0}}(\mu_1,\ldots,\mu_k), \\
     (z_1,\ldots,z_k,w)            & \longmapsto   & (\phi_1(z_1),\ldots,\phi_k(z_k),\psi(z_1,\ldots,z_k)w).
  \end{array}
\end{equation}
Then $F$ is an isometric automorphism of
$\big(\big(\prod_{j=1}^k\Omega_j\big)^{{\mathbb{B}}^{d_0}}(\mu_1,\ldots,\mu_k),g(\mu_1,\ldots,\mu_k)\big)$,
that is
\begin{equation}\label{eq2.4}
   \partial\overline{\partial}(\Phi(F(z_1,\ldots,z_k,w)))=\partial\overline{\partial}(\Phi(z_1,\ldots,z_k,w)),
\end{equation}
where
$\Phi(z_1,\ldots,z_k,w):=-\ln\left(\prod_{i=1}^kN_{\Omega_i}(z_i,\overline{z_i})^{\mu_i}-\|w\|^2\right)$
(see \eqref{eq1.2}).
 }\end{Lemma}

\begin{proof}[Proof]
It is easy to see that $F$ is an automorphism of
$\big(\prod_{j=1}^k\Omega_j\big)^{{\mathbb{B}}^{d_0}}(\mu_1,\ldots,\mu_k)$,
and
\begin{equation}\label{eq2.5}
    N_{\Omega_i}(\phi_i(z_i),\overline{\phi_i(z_i)})^{p_i}=J\phi_i(z_i)N_{\Omega_i}(z_i,\overline{z_i})^{p_i}\overline{J\phi_i(z_i)},
\end{equation}
where $J\phi_i(z_i)$ is the holomorphic Jacobian of the automorphism
$\phi_i$ of $\Omega_i$, $1\leq i\leq k$.

By  \eqref{eq2.2} and \eqref{eq2.5}, we have
\begin{eqnarray*}
   & & \prod_{i=1}^kN_{\Omega_i}(\phi_i(z_i),\overline{\phi_i(z_i)})^{\mu_i} -\|\psi(z_1,\ldots,z_k)w\|^2\\
   &=& \prod_{i=1}^k N_{\Omega_i}(\phi_i(z_i),\overline{\phi_i(z_i)})^{\mu_i}\left(1-\frac{\|w\|^2}{\prod_{i=1}^kN_{\Omega_i}(z_i,\overline{z_i})^{\mu_i}}\right) \\
   &=&\prod_{i=1}^k|J\phi_i(z_i)|^{\frac{2\mu_i}{p_i}}\left(\prod_{i=1}^kN_{\Omega_i}(z_i,\overline{z_i})^{\mu_i}-\|w\|^2\right),
\end{eqnarray*}
which implies \eqref{eq2.4}.
\end{proof}

\begin{Lemma}\label{Le:2.2}{
Let $\Omega_i$ be  an irreducible bounded symmetric domain  in
$\mathbb{C}^{d_i}$ in its Harish-Chandra realization, and denote the
generic norm $N_{\Omega_i}(z_i,\overline{z_i})$, the dimension $d_i$
and the genus $p_i$ for $\Omega_i$ $(1\leq i \leq k)$. Then we have
\begin{equation}\label{eq2.6}
   (\partial\overline{\partial}\Phi)^n=\frac{\prod_{i=1}^k\left(\mu_i^{d_i}C_{\Omega_i}N_{\Omega_i}(z_i,\overline{z_i})^{\mu_i(\sum_{j=1}^kd_j+1)-p_i}\right)}{\left(\prod_{i=1}^kN_{\Omega_i}(z_i,\overline{z_i})^{\mu_i}-\|w\|^2\right)^{n+1}}
   \left(\sum_{j=1}^n dZ_j\wedge d\overline{Z_j}\right)^n,
\end{equation}
where
 $$\Phi(z_1,\ldots,z_k,w)=-\ln\left(\prod_{i=1}^kN_{\Omega_i}(z_i,\overline{z_i})^{\mu_i}-\|w\|^2\right),$$
 $$ C_{\Omega_i}=\left.\det\left(-\frac{\partial^2\ln N_{\Omega_i}(z_i,\overline{z_i})}{\partial z_i^t\partial{\overline{z_i}}}\right)\right|_{z_i=0},$$
$$n=\sum_{j=0}^kd_j,\;\;  Z=(Z_1,\ldots,Z_n)=(z_1,\ldots,z_k,w).$$
 }\end{Lemma}

\begin{proof}[Proof]
It is well known that
\begin{equation}\label{eq2.7}
    \frac{(\frac{\sqrt{-1}}{2\pi}\partial\overline{\partial}\Phi)^n}{n!}=\det\left(\frac{\partial^2\Phi}{\partial Z^t\partial
   \overline{Z}}\right)\frac{\omega_0^n}{n!},
\end{equation}
where $\omega_0=\frac{\sqrt{-1}}{2\pi}\sum_{j=1}^ndZ_j\wedge
d\overline{Z_j}$, $\frac{\partial}{\partial
Z^t}=(\frac{\partial}{\partial Z_1},\frac{\partial}{\partial
Z_2},\ldots,\frac{\partial}{\partial Z_n})^t$,
$\frac{\partial}{\partial \overline{Z}}=(\frac{\partial}{\partial
\overline{Z_1}},\frac{\partial}{\partial
\overline{Z_2}},\ldots,\frac{\partial}{\partial \overline{Z_n}})$
and $\frac{\partial^2}{\partial Z^t\partial
\overline{Z}}=\frac{\partial}{\partial Z^t}\frac{\partial}{\partial
\overline{Z}}$.

From \eqref{eq2.4} and \eqref{eq2.7}, we get
\begin{equation}\label{eq2.8}
\det\left(\frac{\partial^2\Phi(F)}{\partial Z^t\partial
   \overline{Z}}\right)=\det\left(\frac{\partial^2\Phi}{\partial Z^t\partial
   \overline{Z}}\right).
\end{equation}
By the identity
\begin{equation}\label{eq2.9}
\frac{\partial^2\Phi(F)}{\partial Z^t\partial
   \overline{Z}}=\frac{\partial F}{\partial Z^t}\frac{\partial^2\Phi}{\partial Z^t\partial
   \overline{Z}}(F(Z))\overline{\left(\frac{\partial F}{\partial Z^t}\right)}^t
\end{equation}
 and \eqref{eq2.8}, we deduce
\begin{equation}\label{eq2.10}
\det\left(\frac{\partial^2\Phi}{\partial Z^t\partial
   \overline{Z}}\right)(Z)=|JF(Z)|^2\det\left(\frac{\partial^2\Phi}{\partial Z^t\partial
   \overline{Z}}\right)(F(Z)),
\end{equation}
where
\begin{equation*}
F:=(F_1,F_2,\ldots,F_n),~~\frac{\partial F}{\partial
Z^t}:=(\frac{\partial F_1}{\partial
   Z^t},\frac{\partial F_2}{\partial
   Z^t},\ldots,\frac{\partial F_n}{\partial
   Z^t})
\end{equation*}
and
\begin{equation*}\label{eq2.11}
   JF(Z):=\det\left(\frac{\partial F}{\partial
   Z^t}\right)(Z).
\end{equation*}

Let $Z^0=(z_1^0,\ldots,z_k^0,w^0)\in
\left(\prod_{j=1}^k\Omega_j\right)^{{\mathbb{B}}^{d_0}}(\mu_1,\ldots,\mu_k)$,
$\widetilde{Z^0}:=(\widetilde{z_1^0},\ldots,\widetilde{z_k^0},\widetilde{w^0})=F(Z^0)$.
By \eqref{eq2.1} and \eqref{eq2.3}, then
$$\widetilde{Z^0}=\left(0,\ldots,0,\frac{w^0}{\prod_{i=1}^kN_{\Omega_i}(z_i^0,\overline{z_i^0})^{\frac{\mu_i}{2}}}\right)$$
and
\begin{equation}\label{eq2.12}
 |JF(Z^0)|^2=\prod_{i=1}^k|J\phi_i(z_i^0)|^2\cdot|\psi(z_1^0,\ldots,z_k^0)|^{2d_0}.
\end{equation}
Using $N_{\Omega_i}(0,z_i)=1$, \eqref{eq2.1},  \eqref{eq2.5},
\eqref{eq2.12} and \eqref{eq2.10}, we have
\begin{equation}\label{eq2.13}
    |JF(Z^0)|^2=\prod_{i=1}^k\frac{1}{N_{\Omega_i}(z_i^0,\overline{z_i^0})^{p_i+\mu_i d_0}},
\end{equation}
and
\begin{equation}\label{eq2.13}
\det\left(\frac{\partial^2\Phi}{\partial Z^t\partial
   \overline{Z}}\right)(Z^0)=\prod_{i=1}^k\frac{1}{N_{\Omega_i}(z_i^0,\overline{z_i^0})^{p_i+\mu_i d_0}}\det\left(\frac{\partial^2\Phi}{\partial Z^t\partial
   \overline{Z}}\right)(\widetilde{Z^0}).
\end{equation}

A direct calculation gives
 \begin{equation}\label{eq2.24}
  \frac{\partial^2\Phi}{\partial Z^t\partial\overline{Z}}(0,\ldots,0,w)=\left(
                                                            \begin{array}{cccc}
                                                              \frac{\mu_1}{1-\|w\|^2}C_{d_1}  &\cdots & 0      & 0 \\
                                                              \vdots                          &\cdots & \vdots & \vdots \\
                                                              0                               &\cdots &  \frac{\mu_k}{1-\|w\|^2}C_{d_k} & 0 \\
                                                              0  &\cdots & 0 & \frac{1}{1-\|w\|^2}I_{d_0}+\frac{1}{(1-\|w\|^2)^2}\overline{w}^{t}w \\
                                                            \end{array}
                                                          \right),
 \end{equation}
where $I_{d_0}$ denotes  the $d_0\times d_0$ identity matrix,
$\overline{w}^{t}$ is  the complex conjugate transpose of the row
vector $w=(w_1,w_2,\cdots,w_{d_0})$, and
$C_{d_i}=-\left.\frac{\partial^2\ln N_{\Omega_i}}{\partial
z_i^t\partial\overline{z_i}}\right|_{z_i=0}$.

From \eqref{eq2.24}, we have
\begin{equation}\label{eq2.25}
    \det\left(\frac{\partial^2\Phi}{\partial Z^t\partial
   \overline{Z}}\right)(0,\ldots,0,w)=\frac{\prod_{i=1}^k(\mu_i^{d_i}\det C_{d_i}) }{(1-\|w\|^2)^{\sum_{j=0}^kd_j+1}}.
\end{equation}
Finally, combining \eqref{eq2.13} and \eqref{eq2.25}, we have
\eqref{eq2.6}.
\end{proof}

\begin{Theorem}\label{Th:a.2}{Let $\Omega_i$ be an irreducible bounded symmetric domain  in
$\mathbb{C}^{d_i}$ in its Harish-Chandra realization, and denote the
generic norm $N_{\Omega_i}$,  the genus $p_i$, the dimension $d_i$
and the Hua polynomial $\chi_i$ (see \eqref{1.3}) for $\Omega_i$
$(1\leq i \leq k)$. Let the generalized Cartan-Hartogs domain
$\big(\prod_{j=1}^k\Omega_j\big)^{{\mathbb{B}}^{d_0}}(\mu)$ be
endowed with the canonical metric $g(\mu)$. Set $n=\sum_{j=0}^kd_j$.
For
$\alpha>\max\{n,\frac{p_1-1}{\mu_1},\ldots,\frac{p_k-1}{\mu_k}\}$,
then the Bergman kernel $K_{\alpha}(Z;\overline{Z})$ of the weighted
 Hilbert space
$$\mathcal{H}_{\alpha}=\left\{ f\in \mbox{\rm Hol}\big(\big(\prod_{j=1}^k\Omega_j\big)^{{\mathbb{B}}^{d_0}}(\mu)\big):
 \int_{\left(\prod_{j=1}^k\Omega_j\right)^{{\mathbb{B}}^{d_0}}(\mu)}|f|^2\exp\{-\alpha \Phi\}
\frac{\omega(\mu)^{n}}{n!}<+\infty\right\}$$ can be written as
\begin{eqnarray}
\nonumber      && K_{\alpha}(Z;\overline{Z})  \\
\label{eq2.26}
               &=&\left.\frac{(\alpha-n)_{d_0}}{\prod_{i=1}^k\mu_i^{d_i}}\prod_{i=1}^k\left(\frac{1}{N_{\Omega_i}(z_i,\overline{z_i})}\right)^{\mu_i\alpha}
                       \prod_{i=1}^k\chi_i\big(\mu_i(\alpha+t\frac{d}{dt})-p_i\big)\frac{1}{\left(1-t\frac{\|w\|^2}
                       {\prod_{i=1}^kN_{\Omega_i}(z_i,\overline{z_i})^{\mu_i}}\right)^{\alpha-d}}\right|_{t=1},
\end{eqnarray}
where $Z=(z_1,\ldots,z_k,w)$,\; $d=\sum_{j=1}^kd_j$. }\end{Theorem}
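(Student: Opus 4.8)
The plan is to compute $K_\alpha$ by decomposing the Hilbert space $\mathcal{H}_\alpha$ along powers of $w$, using the Hartogs (circular) structure of the domain in the $w$-variable, and reducing each summand to a weighted Bergman computation on the base $\prod_j \Omega_j$. First I would use Lemma~\ref{Le:2.2} to rewrite the weight and volume form: by \eqref{eq2.6}, $\exp\{-\alpha\Phi\}\,\omega(\mu)^n/n!$ equals a constant multiple of
$$\Big(\prod_{i=1}^k N_{\Omega_i}(z_i,\overline{z_i})^{\mu_i(d+1)-p_i}\Big)\Big(\prod_{i=1}^k N_{\Omega_i}(z_i,\overline{z_i})^{\mu_i}-\|w\|^2\Big)^{-(n+1-\alpha)}\,dm(z)\,dm(w),$$
so the $L^2$ condition becomes an explicit weighted integral over $\prod_j\Omega_j\times\mathbb{B}^{d_0}$. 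Expanding a holomorphic $f$ as $f=\sum_{\beta} f_\beta(z)w^\beta$ over multi-indices $\beta\in\mathbb{N}^{d_0}$, orthogonality in $w$ (rotational invariance) splits the norm into a sum over $\beta$, and the $w$-integral $\int_{\|w\|^2<\rho}|w^\beta|^2(\rho-\|w\|^2)^{\alpha-n-1}dm(w)$ is a Beta-type integral producing, for $\rho=\prod_i N_{\Omega_i}^{\mu_i}$, a factor $\rho^{|\beta|}\cdot\rho^{d_0}\cdot\Gamma$-quotients; the exponent of $\rho$ combines with $\mu_i(d+1)-p_i$ to leave a weighted integral $\int_{\prod\Omega_j}|f_\beta(z)|^2\prod_i N_{\Omega_i}(z_i,\overline{z_i})^{\mu_i(|\beta|+\alpha)-p_i}dm(z)$. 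This is where the constraint $\alpha>\max\{n,(p_i-1)/\mu_i\}$ enters: the $w$-integral converges iff $\alpha>n$ (so $(\alpha-n)_{d_0}$ makes sense), and the base integral converges iff each exponent $>-1$, i.e. $\mu_i(|\beta|+\alpha)-p_i>-1$, which for $|\beta|\ge 0$ reduces to $\alpha>(p_i-1)/\mu_i$.

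Next I would identify the reproducing kernel of the base piece. The weighted space $A^2(\prod_j\Omega_j, \prod_i N_{\Omega_i}(z_i,\overline{z_i})^{s_i}dm)$ factors as a tensor product over $j$, and by the Forelli–Rudin / Faraut–Korányi machinery recalled in \eqref{1.7}, its reproducing kernel is $\prod_i c_i(s_i)\, N_{\Omega_i}(z_i,\overline{z_i})^{-(s_i+p_i)}$ where $c_i(s_i)$ is the reciprocal of the Hua integral normalization \eqref{1.2}, i.e. a ratio of Hua polynomials $\chi_i$ evaluated at shifts of $s_i$. With $s_i=\mu_i(|\beta|+\alpha)-p_i$ this gives the $|\beta|$-summand of $K_\alpha$ as a constant times $\prod_i \chi_i(\mu_i(|\beta|+\alpha)-p_i)^{-1}\cdot\prod_i N_{\Omega_i}(z_i,\overline{z_i})^{-\mu_i(|\beta|+\alpha)}$ — wait, one must be careful about whether $\chi_i$ appears in the numerator or denominator; tracking \eqref{1.2}–\eqref{1.3} shows the normalizing constant is $\chi_i(s_i)/(\chi_i(0)V(\Omega_i))$, so assembling everything and summing $\sum_{\beta\in\mathbb{N}^{d_0}} (\cdots) |w|^{2|\beta|}$ — with the standard count of multi-indices of weight $m$ being $\binom{m+d_0-1}{d_0-1}$ — produces a hypergeometric-type series in $\|w\|^2/\prod_i N_{\Omega_i}^{\mu_i}$.

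Finally I would recognize this series as the operator expression on the right-hand side of \eqref{eq2.26}. The key device is that the operator $t\frac{d}{dt}$ acts on $t^m$ as multiplication by $m$, so for any polynomial $Q$, $Q(\alpha+t\frac{d}{dt})$ applied to $(1-tu)^{-(\alpha-d)}=\sum_m \binom{m+\alpha-d-1}{m}(tu)^m$ and evaluated at $t=1$ multiplies the $m$-th coefficient by $Q(\alpha+m)$; taking $Q=\prod_i\chi_i(\mu_i\,\cdot\,-p_i)$ (a polynomial in its argument) reproduces exactly the factor $\prod_i\chi_i(\mu_i(\alpha+m)-p_i)$ attached to $u^m$, matching the Beta-integral and Hua-constant bookkeeping up to the overall prefactor $(\alpha-n)_{d_0}/\prod_i\mu_i^{d_i}$. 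I expect the main obstacle to be the careful bookkeeping in this last identification: matching the $\Gamma$-function quotients from the $w$-integral and from the Hua normalization \eqref{1.2} against the Pochhammer symbols generated by $(1-tu)^{-(\alpha-d)}$ and by $Q(\alpha+t\frac{d}{dt})$, and verifying that every $\mu_i$-power, every shift by $p_i$, and the constant $\prod_i C_{\Omega_i}$ from \eqref{eq2.6} (which must cancel against $\chi_i(0)V(\Omega_i)$-type constants via the known identity relating $C_{\Omega_i}$ to the Hua data) collapse to precisely the stated prefactor. Convergence and the legitimacy of interchanging summation with the differential operator at $t=1$ follow from the range $\alpha>\max\{n,(p_i-1)/\mu_i\}$, which guarantees the series converges locally uniformly on the domain.
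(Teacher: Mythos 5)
Your proposal is correct and arrives at \eqref{eq2.26}, but it organizes the computation differently from the paper. You expand $f=\sum_\beta f_\beta(z)w^\beta$ in $w$-monomials (the Ligocka/Forelli--Rudin inflation), evaluate the fiberwise Beta-type integral, and then quote the closed form $\tfrac{\chi_i(s_i)}{\chi_i(0)V(\Omega_i)}N_{\Omega_i}(z_i,\overline{z_i})^{-(s_i+p_i)}$ for the reproducing kernel of the weighted Bergman space on each base factor. The paper instead performs a single Peter--Weyl decomposition of $\mathcal H_\alpha$ under $\mathcal K_0\times\cdots\times\mathcal K_k$, treating $\mathbb B^{d_0}$ on the same footing as the $\Omega_i$, invokes Schur's lemma to see that each isotypic component carries a constant multiple of the product of Fock--Fischer kernels, computes that constant from the integral formula \eqref{eq} and the Hua integral \eqref{1.2}, and then resums twice with the Faraut--Kor\'anyi formula \eqref{1.7} --- over the $\lambda_i$ to produce $N_{\Omega_i}^{-\mu_i(\alpha+|\lambda_0|)}$ and over $\lambda_0$ to produce $(1-tu)^{-(\alpha-d)}$; the final $t\frac{d}{dt}$ device and the cancellation of $\prod_iC_{\Omega_i}$ against $V(\Omega_i)\chi_i(0)$ are identical in both arguments. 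Your route is more elementary on the $w$-side (monomials and a one-variable Beta integral rather than isotypic components) and shorter on the base side because it takes the weighted Bergman kernel of a bounded symmetric domain as known, whereas the paper rederives it from \eqref{eq} and \eqref{1.7}. Two minor bookkeeping remarks: when you sum over $\beta$ with $|\beta|=m$ the relevant identity is $\sum_{|\beta|=m}w^\beta\overline{w}^\beta/\beta!=\|w\|^{2m}/m!$ rather than the raw count $\binom{m+d_0-1}{d_0-1}$, since the norms $\|w^\beta\|^2$ depend on $\beta$ and not only on $|\beta|$; and the normalization you flagged does come out with $\chi_i(s_i)$ in the numerator, which is exactly what turns the $m$-th coefficient into $\prod_i\chi_i(\mu_i(\alpha+m)-p_i)$ and hence into the operator $\prod_i\chi_i\big(\mu_i(\alpha+t\frac{d}{dt})-p_i\big)$ acting on $(1-tu)^{-(\alpha-d)}$.
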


\begin{proof}[Proof] By  \eqref{eq2.6}, the inner product on
$\mathcal{H}_{\alpha}$ is given by
\begin{eqnarray}
\nonumber (f,g)  &=& \frac{\prod_{i=1}^k(\mu_i^{d_i}C_{\Omega_i})}{\pi^{n}}\int_{\left(\prod_{j=1}^k\Omega_j\right)^{{\mathbb{B}}^{d_0}}(\mu)}f(Z)\overline{g(Z)} \\
\nonumber   & &
\times\prod_{i=1}^kN_{\Omega_i}(z_i,\overline{z_i})^{\mu_i(\alpha-d_0)-p_i}\Big(1-\frac{\|w\|^2}
{\prod_{i=1}^kN_{\Omega_i}(z_i,\overline{z_i})^{\mu_i}}\Big)^{\alpha-n-1}dm(Z),
\end{eqnarray}
where $dm$ denotes the Euclidean measure.

For convenience, we set $\Omega_0={\mathbb{B}}^{d_0}$, $z_0=w$.
Denote the rank $r_i$, the characteristic multiplicities $a_i, b_i$,
the dimension $d_i$, the genus $p_i$, the Hua polynomial $ \chi_i$,
the generalized Pochhammer symbol $(s)_\mathbf{\lambda}^{(i)}$, the
generic norm $N_{\Omega_i}$ and the Euclidean volume $V(\Omega_i)$
for the irreducible bounded symmetric domain $\Omega_i$, $0\leq
i\leq k$.

Let $\mathcal{G}_i$ stand for the identity connected components of
 groups of biholomorphic self-maps of $\Omega_i \subset \mathbb{C}^{d_i}$, and $\mathcal{K}_i$
for stabilizers of the origin in $\mathcal{G}_i$, respectively. For
any $u=(u_0,\ldots,u_k)\in \mathcal{K}:= \mathcal{K}_0\times\ldots\times\mathcal{K}_k$,
we define the action
 $$ \pi(u)f(z_1,\ldots,z_k,w)\equiv f\circ u(z_1,\ldots,z_k,w):=f(u_1\circ z_1,\ldots,u_k\circ z_k,u_0\circ w)$$
 of $\mathcal{K}$, then the space $\mathcal{P}$
of holomorphic polynomials on $\prod_{j=0}^k\mathbb{C}^{d_j}$ admits the Peter-Weyl decomposition
$$\mathcal{P}=\bigoplus_{{\ell(\lambda_i)\leq r_i\atop 0\leq i\leq k}}\mathcal{P}^{(0)}_{\lambda_0}\otimes \ldots\otimes \mathcal{P}^{(k)}_{\lambda_k},$$
where  the space $\mathcal{P}^{(i)}_{\lambda_i}$ is
$\mathcal{K}_i$-invariant and irreducible subspace of the space
 of holomorphic polynomials on $\mathbb{C}^{d_i}$, and $\ell(\lambda_i)$ denotes the length of partition $\lambda_i$ $(0\leq i \leq k)$.

Since $\mathcal{H}_{\alpha}$ is invariant under the action of
$\mathcal{K}$, namely, $\forall u\in
\mathcal{K}$, $(\pi(u)f,\pi(u)g)=(f,g)$,
 $\mathcal{H}_{\alpha}$ admits
an irreducible decomposition (see \cite{Far-Tho})
$$\mathcal{H}_{\alpha}=\widehat{\bigoplus_{{\ell(\lambda_i)\leq r_i\atop 0\leq i\leq k}}}\mathcal{P}^{(0)}_{\lambda_0}\otimes \ldots\otimes \mathcal{P}^{(k)}_{\lambda_k},$$
where $\widehat{\bigoplus}$ denotes the orthogonal direct sum.

For given partition $\lambda_i$ of length $\leq r_i$, let
$K^{(i)}_{\lambda_i}(z_i;\overline{z_i})$ be the reproducing kernel
of $\mathcal{P}^{(i)}_{\lambda_i}$ with respect to \eqref{1.4}. By
Schur's lemma, there exists a positive constant
$c_{\lambda_0\ldots\lambda_k}$ such that
$c_{\lambda_0\ldots\lambda_k}\prod_{j=0}^kK^{(j)}_{\lambda_j}(z_j;\overline{z_j})$
is the reproducing kernel of $\mathcal{P}^{(0)}_{\lambda_0}\otimes
\ldots\otimes \mathcal{P}^{(k)}_{\lambda_k}$ with respect to the
above inner product $(\cdot,\cdot)$. From the definition of the
reproducing kernel, we have
\begin{eqnarray*}
   & & \frac{\prod_{i=1}^k(\mu_i^{d_i}C_{\Omega_i})}{\pi^{n}}\int_{\left(\prod_{j=1}^k\Omega_j\right)^{{\mathbb{B}}^{d_0}}(\mu)}c_{\lambda_0\ldots\lambda_k}\prod_{j=0}^kK^{(j)}_{\lambda_j}(z_j;\overline{z_j}) \\
   & &\times\prod_{i=1}^kN_{\Omega_i}(z_i,\overline{z_i})^{\mu_i(\alpha-d_0)-p_i}\Big(1-\frac{\|w\|^2}{\prod_{i=1}^k
   N_{\Omega_i}(z_i,\overline{z_i})^{\mu_i}}\Big)^{\alpha-n-1}\prod_{j=0}^kdm(z_j)  \\
   &=&\prod_{i=0}^k\dim\mathcal{P}^{(i)}_{\lambda_i}.
\end{eqnarray*}

Therefore, the Bergman kernel of $\mathcal{H}_{\alpha}$ can be
written as
\begin{equation}\label{e4.4}
  K_{\alpha}(Z;\overline{Z})=\sum_{{\ell(\lambda_i)\leq r_i\atop 0\leq
    i\leq k}}\frac{\prod_{i=0}^k\dim\mathcal{P}^{(i)}_{\lambda_i}}{<\prod_{j=0}^kK^{(j)}_{\lambda_j}(z_j;\overline{z_j})>}\prod_{j=0}^kK^{(j)}_{\lambda_j}(z_j;\overline{z_j}),
\end{equation}
where $<f>$ denotes integral
\begin{eqnarray}
\nonumber <f>  &=& \frac{\prod_{i=1}^k(\mu_i^{d_i}C_{\Omega_i})}{\pi^{n}}\int_{\left(\prod_{j=1}^k\Omega_j\right)^{{\mathbb{B}}^{d_0}}(\mu)}f(Z) \\
\nonumber   & &
\times\prod_{i=1}^kN_{\Omega_i}(z_i,\overline{z_i})^{\mu_i(\alpha-d_0)-p_i}\Big(1-\frac{\|w\|^2}{\prod_{i=1}^kN_{\Omega_i}(z_i,\overline{z_i})^{\mu_i}}\Big)
^{\alpha-n-1}dm(Z).
\end{eqnarray}

If $\mu_i\alpha-p_i>-1$ and $\alpha-n-1>-1$, namely
$\alpha>\max\{n,\frac{p_1-1}{\mu_1},\ldots,\frac{p_k-1}{\mu_k}\}$,
combining (see \cite{F})
\begin{equation}\label{eq}
    \int_{\Omega}K_{\lambda}(z,\overline{z})N_{\Omega}(z,\overline{z})^sdm(z)=\frac{\dim \mathcal{P}_{\lambda}}{(p+s)_{\lambda}}
    \int_{\Omega}N_{\Omega}(z,\overline{z})^sdm(z)
\end{equation}
for $ s>-1 $  and \eqref{1.2},  we have
\begin{eqnarray}
\nonumber   & & <\prod_{j=0}^kK^{(j)}_{\lambda_j}(z_j;\overline{z_j})> \\
\nonumber   &=&
\frac{\prod_{i=1}^k(\mu_i^{d_i}C_{\Omega_i})}{\pi^{n}}\prod_{i=1}^k\int_{\Omega_i}K^{(i)}_{\lambda_i}(z_i;\overline{z_i})N_{\Omega_i}(z_i,\overline{z_i})^{\mu_i(\alpha+|\lambda_0|)-p_i}
dm(z_i)\\
\nonumber   & &
\times \int_{{\mathbb{B}}^{d_0}} K^{(0)}_{\lambda_0}(w;\overline{w})(1-\|w\|^2)^{\alpha-n-1}dm(w)\\
\label{e4.5}
&=&\frac{\prod_{i=1}^k(\mu_i^{d_i}C_{\Omega_i}\chi_i(0)V(\Omega_i))\cdot V({\mathbb{B}}^{d_0})\chi_0(0)}{\pi^{n}\chi_0(\alpha-n-1)\prod_{i=1}^k\chi_i(\mu_i(\alpha+|\lambda_0|)-p_i)}
\frac{\prod_{i=0}^k\dim\mathcal{P}^{(i)}_{\lambda_i}}{(\alpha-d)_{\lambda_0}^{(0)}\prod_{i=1}^k(\mu_i(\alpha+|\lambda_0|))_{\lambda_i}^{(i)}},
\end{eqnarray}
where we use the fact $p_0=d_0+1$.

Combining  \eqref{e4.4}, \eqref{e4.5} and \eqref{1.7}, we get
\begin{eqnarray}
\nonumber   & & K_{\alpha}(Z;\overline{Z}) \\
\nonumber    &=&
  \sum_{{\ell(\lambda_i)\leq r_i\atop 0\leq i\leq k}}
  c\left(\prod_{i=1}^k\chi_i(\mu_i(\alpha+|\lambda_0|)-p_i)(\mu_i(\alpha+|\lambda_0|))_{\lambda_i}^{(i)}K^{(j)}_{\lambda_j}(z_j;\overline{z_j})\right)(\alpha-d)_{\lambda_0}^{(0)}
   K^{(0)}_{\lambda_0}(w;\overline{w})   \\
\nonumber   &=&
c\sum_{{\ell(\lambda_0)\leq r_0}}\prod_{i=1}^k \chi_i(\mu_i(\alpha+|\lambda_0|)-p_i)\frac{1}{N_{\Omega_i}(z_i,\overline{z_i})^{\mu_i(\alpha+|\lambda_0|)}}\cdot(\alpha-d)_{\lambda_0}^{(0)} K^{(0)}_{\lambda_0}(w,\overline{w})   \\
\nonumber   &=&
\left.\frac{c}{{\prod_{i=1}^kN_{\Omega_i}(z_i,\overline{z_i})^{\mu_i\alpha}}}\sum_{{\ell(\lambda_0)\leq
r_0}}\prod_{i=1}^k\chi_i(\mu_i(\alpha+t\frac{d}{dt})-p_i)(\alpha-d)_{\lambda_0}^{(0)}
K^{(0)}_{\lambda_0}\Big(\frac{tw}
{\prod_{i=1}^kN_{\Omega_i}(z_i,\overline{z_i})^{\mu_i}},\overline{w}\Big)\right|_{t=1}   \\
\nonumber
&=&\left.\frac{c}{{\prod_{i=1}^kN_{\Omega_i}(z_i,\overline{z_i})^{\mu_i\alpha}}}
\prod_{i=1}^k\chi_i(\mu_i(\alpha+t\frac{d}{dt})-p_i)\frac{1}{\left(1-\frac{t\|w\|^2}{\prod_{i=1}^kN_{\Omega_i}(z_i,\overline{z_i})^{\mu_i}}\right)^{\alpha-d}}\right|_{t=1},
\end{eqnarray}
where
$$c=\frac{\pi^{n}\chi_0(\alpha-n-1)}{\prod_{i=1}^k(\mu_i^{d_i}C_{\Omega_i}\chi_i(0)V(\Omega_i))V({\mathbb{B}}^{d_0})\chi_0(0)}.$$
Combining
$$V(\Omega_i)=\frac{\pi^{d_i}}{C_{\Omega_i}\chi_i(0)},~~V({\mathbb{B}}^{d_0})=\frac{\pi^{d_0}}{\chi_0(0)}~(\text{refer to} \cite{Hua, Ko})
\;\;\mbox{and} \;\; \chi_0(x)=(x+1)_{d_0},$$ we obtain
\eqref{eq2.26}.
\end{proof}

In order to simplify  \eqref{eq2.26}, we need Lemma \ref{Le:2.1.1} below.

\begin{Lemma}\textup{(see \cite{F})}\label{Le:2.1.1}{
Let $\varphi(x)$ be a polynomial in $x$ of degree $n$ and let $Z$ be
a matrix of order $m$. Assume $t$ is a real variable such that
$||tZ||<1$, {where $||Z||$ denotes the norm of  $Z$}. For a real
number $n_0$, take $x_0=-mn_0$. Then we have
\begin{equation}\label{2.4.0}
\varphi(t\frac{d}{dt})\frac{1}{{\det}(I-tZ)^{n_0}}=\frac{1}{{\det}(I-tZ)^{n_0}}\sum_{j=0}^n\frac{D^j\varphi(x_0)}{j!}\sum_{|\lambda|=j}\frac{|\lambda|!}{z_{\lambda}}n_0^{\ell(\lambda)}p_{\lambda}(\frac{1}{I-tZ}),
\end{equation}
where
\begin{equation*}
    \lambda=(1^{m_1(\lambda)}2^{m_2(\lambda)}\ldots),\quad m_i(\lambda) \geq 0,
\end{equation*}
\begin{equation*}
    |\lambda|:=\sum_{i}im_i(\lambda),\quad\ell(\lambda):=\sum_{i}m_i(\lambda),\quad z_{\lambda}:=\prod_{i}i^{m_i(\lambda)}m_i(\lambda)!,
\end{equation*}
\begin{equation*}
    p_{\lambda}(Z):=\prod_{i}(\textup{Tr}Z^i)^{m_i(\lambda)},\quad D^j\varphi(x_0)=\sum_{l=0}^j{j\choose l}(-1)^l\varphi(x_0-l).
\end{equation*}
 }\end{Lemma}

Combing Theorem \ref{Th:a.2} and Lemma \ref{Le:2.1.1}, we obtain the
explicit expression of the Bergman kernel $K_{\alpha}$ of the
weighted Hilbert space $\mathcal{H}_{\alpha}$ as follows.

\begin{Theorem}\label{Th:2.2}
Assume
\begin{equation}\label{eq2.29}
   \widetilde{\chi}(x):=\prod_{i=1}^k\chi_i(\mu_i x-p_i)\equiv \prod_{i=1}^k \prod_{j=1}^{r_i}\left(\mu_i
   x-p_i+1+(j-1)\frac{a_i}{2}\right)_{1+b_i+(r_i-j)a_i}.
\end{equation}
Let $D^j\widetilde{\chi}(x)$ be the  $j$-order difference of
$\widetilde{\chi}$ at $x$, that is
\begin{equation}\label{eq2.29.0}
D^j\widetilde{\chi}(x)=\sum_{l=0}^j{j\choose
l}(-1)^l\widetilde{\chi}(x-l).
\end{equation}
Then \eqref{eq2.26} can be rewritten as
\begin{eqnarray}
\nonumber          & & K_{\alpha}(z_1,\ldots,z_k,w;\overline{z_1},\ldots,\overline{z_k},\overline{w})  \\
\label{eq2.26.1}
                   &=&\frac{1}{\prod_{i=1}^k\mu_i^{d_i}}\prod_{i=1}^k\left(\frac{1}{N_{\Omega_i}(z_i,\overline{z_i})}\right)^{\mu_i\alpha}
                       \sum_{j=0}^{d}\frac{D^j\widetilde{\chi}(d)}{j!}\frac{(\alpha-n)_{j+d_0}}{\left(1-\frac{\|w\|^2}{\prod_{i=1}^kN_{\Omega_i}(z_i,\overline{z_i})^{\mu_i}}\right)^{\alpha-d+j}}.
\end{eqnarray}
\end{Theorem}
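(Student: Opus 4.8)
The plan is to start from the already–established closed form \eqref{eq2.26} and apply Lemma~\ref{Le:2.1.1} to the operator $\prod_{i=1}^k\chi_i\big(\mu_i(\alpha+t\tfrac{d}{dt})-p_i\big)$ acting on the function $t\mapsto \big(1-t\,u\big)^{-(\alpha-d)}$, where we abbreviate $u:=\|w\|^2/\prod_{i=1}^k N_{\Omega_i}(z_i,\overline{z_i})^{\mu_i}$. The point is that $\big(1-tu\big)^{-(\alpha-d)}=\det(I-tZ)^{-(\alpha-d)}$ for the $1\times 1$ ``matrix'' $Z=(u)$, so Lemma~\ref{Le:2.1.1} applies with $m=1$ and $n_0=\alpha-d$. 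The composite operator $\prod_{i=1}^k\chi_i(\mu_i x-p_i)$ is, by the definition \eqref{eq2.29}, exactly the polynomial $\widetilde{\chi}(x)$ in the variable $x=\alpha+t\tfrac{d}{dt}$; but one has to be slightly careful that ``$x$'' in the differential operator is $\alpha+t\tfrac{d}{dt}$, i.e. $t\tfrac{d}{dt}$ shifted by the constant $\alpha$. So first I would record the elementary fact that for any polynomial $\varphi$, $\varphi(\alpha+t\tfrac{d}{dt})=\widetilde\varphi(t\tfrac{d}{dt})$ where $\widetilde\varphi(y):=\varphi(\alpha+y)$, and hence the operator in \eqref{eq2.26} is $\widetilde{\chi}(\alpha+t\tfrac{d}{dt})$, which we may feed into Lemma~\ref{Le:2.1.1} with $\varphi$ there taken to be $y\mapsto \widetilde\chi(\alpha+y)$ of degree $d$.

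Next I would compute the base point $x_0$ required by the lemma. With $m=1$ and $n_0=\alpha-d$ we get $x_0=-mn_0=-(\alpha-d)=d-\alpha$; but since our polynomial has been shifted, the relevant evaluation point of $\widetilde\chi$ itself is $\alpha+x_0=\alpha+(d-\alpha)=d$. This is the source of the $D^j\widetilde{\chi}(d)$ appearing in \eqref{eq2.26.1}: the finite differences $D^j$ of the shifted polynomial at $x_0$ coincide with the finite differences of $\widetilde{\chi}$ at $d$, by definition \eqref{eq2.29.0}. For the $1\times 1$ case the combinatorial sum over partitions in \eqref{2.4.0} collapses dramatically: $p_\lambda(\tfrac{1}{I-tZ})=(\tfrac{1}{1-tu})^{\ell(\lambda)}$, and $\sum_{|\lambda|=j}\tfrac{|\lambda|!}{z_\lambda} n_0^{\ell(\lambda)}=(n_0)_j$ is the rising factorial — this is the classical identity $\sum_{|\lambda|=j}\tfrac{j!}{z_\lambda}x^{\ell(\lambda)}=(x)_j$ for power-sum/monomial symmetric function expansions in one variable (equivalently it follows from $\big(t\tfrac{d}{dt}\big)^{\!j}(1-tu)^{-n_0}\big|$ generating $(n_0)_j$). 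Thus Lemma~\ref{Le:2.1.1} gives
\[
\widetilde\chi\big(\alpha+t\tfrac{d}{dt}\big)\frac{1}{(1-tu)^{\alpha-d}}
=\frac{1}{(1-tu)^{\alpha-d}}\sum_{j=0}^{d}\frac{D^j\widetilde\chi(d)}{j!}\,\frac{(\alpha-d)_j}{(1-tu)^{j}} .
\]

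Then I would substitute this back into \eqref{eq2.26}, set $t=1$, and simplify the prefactor. The factor $(\alpha-n)_{d_0}$ from \eqref{eq2.26} multiplies $(\alpha-d)_j$; using $n=d+d_0$ one has $(\alpha-n)_{d_0}\,(\alpha-d)_j=(\alpha-d-d_0)_{d_0}\,(\alpha-d)_j$, and I would check the rising-factorial identity $(\alpha-d-d_0)_{d_0}(\alpha-d)_j=(\alpha-n)_{j+d_0}$ — indeed $(\alpha-n)_{j+d_0}=\tfrac{\Gamma(\alpha-n+j+d_0)}{\Gamma(\alpha-n)}=\tfrac{\Gamma(\alpha-d+j)}{\Gamma(\alpha-d)}\cdot\tfrac{\Gamma(\alpha-d)}{\Gamma(\alpha-n)}=(\alpha-d)_j\,(\alpha-n)_{d_0}$, which is exactly what is needed. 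Collecting the remaining factor $\prod_{i=1}^k\mu_i^{-d_i}\prod_{i=1}^k N_{\Omega_i}(z_i,\overline{z_i})^{-\mu_i\alpha}$ unchanged from \eqref{eq2.26} then yields \eqref{eq2.26.1}. The only genuinely delicate point is the bookkeeping of the argument shift (Lemma~\ref{Le:2.1.1} is stated for $\varphi(t\tfrac{d}{dt})$, not $\varphi(\alpha+t\tfrac{d}{dt})$) together with the reconciliation of the two evaluation points $x_0=d-\alpha$ and $d$; once that is pinned down, the $1\times 1$ specialization of \eqref{2.4.0} and two elementary Pochhammer identities finish the computation. I expect the main obstacle to be purely notational rather than conceptual: making sure the finite-difference operator $D^j$ in \eqref{eq2.29.0} is applied to $\widetilde\chi$ at exactly the right point and with the right sign conventions inherited from \eqref{2.4.0}.
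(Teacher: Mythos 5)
Your proposal is correct and follows essentially the same route as the paper: the paper likewise applies Lemma~\ref{Le:2.1.1} to the shifted polynomial $\varphi(x)=\prod_{i=1}^k\chi_i(\mu_i(\alpha+x)-p_i)$ with $x_0=d-\alpha$ (so that $D^j\varphi(x_0)=D^j\widetilde{\chi}(d)$), uses the identity $(x)_j=\sum_{|\lambda|=j}\frac{|\lambda|!}{z_{\lambda}}x^{\ell(\lambda)}$, and finishes with $(\alpha-n)_{d_0}(\alpha-d)_j=(\alpha-n)_{d_0+j}$. One small slip: in the $1\times 1$ specialization one has $p_{\lambda}\big(\tfrac{1}{I-tZ}\big)=(1-tu)^{-|\lambda|}=(1-tu)^{-j}$, not $(1-tu)^{-\ell(\lambda)}$ --- this is exactly why that factor is constant over the sum $|\lambda|=j$ and can be pulled out, leaving $(n_0)_j$ --- but your displayed conclusion is nevertheless the correct one.
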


\begin{proof}[Proof] Let $x_0=d-\alpha$, $\varphi(x)=\prod_{i=1}^k\chi_i(\mu_i(\alpha+x)-p_i)$. From
$$\prod_{i=1}^k\chi_i(\mu_i(\alpha+x)-p_i)|_{x=x_0-l}=\prod_{i=1}^k\chi_i(\mu_i(d-j)-p_i)=\widetilde{\chi}(d-l),$$
 we have
 $$D^j\varphi(x)|_{x=x_0}=D^j\widetilde{\chi}(d).$$
 Using \eqref{2.4.0} and
 \begin{equation*}
    (x)_j=\sum_{|\lambda|=j}\frac{|\lambda|!}{z_{\lambda}}x^{\ell(\lambda)},
 \end{equation*}
 we have
\begin{equation}\label{eq2.31}
 \varphi(t\frac{d}{dt})\frac{1}{(1-tz)^{\alpha-d}}=\frac{1}{(1-tz)^{\alpha-d}}\sum_{j=0}^{d}\frac{D^j\widetilde{\chi}(d)}{j!}\frac{(\alpha-d)_j}{(1-tz)^j}.
\end{equation}
Combining
\begin{equation}\label{eq2.32}
    (\alpha-n)_{d_0}(\alpha-d)_j=(\alpha-n)_{d_0+j}
\end{equation}
and \eqref{eq2.31},  we get \eqref{eq2.26.1}.
\end{proof}

\setcounter{equation}{0}
\section{The Rawnsley's $\varepsilon$-function for $\big(\prod_{j=1}^k\Omega_j\big)^{{\mathbb{B}}^{d_0}}(\mu)$  with the canonical metric $g(\mu)$}

In this section we give the explicit expression of  the Rawnsley's
$\varepsilon$-function and the coefficients $a_1,a_2$ of its
expansion for the generalized Cartan-Hartogs domain
$\big(\big(\prod_{j=1}^k\Omega_j\big)^{{\mathbb{B}}^{d_0}}(\mu),
g(\mu)\big)$  with the canonical metric $g(\mu)$.

\begin{Theorem}\label{Th:2.3}{Let $\Omega_i$ be an irreducible bounded symmetric domain  in
$\mathbb{C}^{d_i}$ in its Harish-Chandra realization, and denote the
generic norm $N_{\Omega_i}(z_i,\overline{z_i})$, the dimension $d_i$
and the genus $p_i$ for $\Omega_i$ $(1\leq i \leq k)$. Set
$n=\sum_{j=0}^kd_j$, $d=\sum_{j=1}^kd_j$ and
$\alpha>\max\{n,\frac{p_1-1}{\mu_1},\ldots,\frac{p_k-1}{\mu_k}\}$.
Then the Rawnsley's $\varepsilon$-function associated to
$\big(\big(\prod_{j=1}^k\Omega_j\big)^{{\mathbb{B}}^{d_0}}(\mu),
g(\mu)\big)$ can be written as
\begin{equation}\label{eq2.37}
    \varepsilon_{\alpha}(z_1,\ldots,z_k,w)=\frac{1}{\prod_{i=1}^k\mu_i^{d_i}}\sum_{j=0}^{d}\frac{D^j\widetilde{\chi}(d)}{j!}\Big(1-\frac{\|w\|^2}
    {\prod_{i=1}^kN_{\Omega_i}(z_i,\overline{z_i})^{\mu_i}}\Big)^{d-j}(\alpha-n)_{j+d_0}
\end{equation}
$($see \eqref{eq2.29} and \eqref{eq2.29.0} for the definition of the
functions $\widetilde{\chi}(x)$ and $D^j\widetilde{\chi}(x)$
respectively$)$. }\end{Theorem}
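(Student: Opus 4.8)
The plan is to derive \eqref{eq2.37} directly from the explicit Bergman kernel formula \eqref{eq2.26.1} obtained in Theorem \ref{Th:2.2}, together with the definition \eqref{eq1.4} of the Rawnsley's $\varepsilon$-function, namely $\varepsilon_{\alpha}(z,w) = \exp\{-\alpha\Phi(z,w)\}\,K_{\alpha}(Z;\overline Z)$. The key observation is that the potential $\Phi$ in \eqref{eq1.2} is $\Phi(z,w) = -\ln\big(\prod_{i=1}^k N_{\Omega_i}(z_i,\overline{z_i})^{\mu_i} - \|w\|^2\big)$, so that
\[
\exp\{-\alpha\Phi(z,w)\} = \Big(\prod_{i=1}^k N_{\Omega_i}(z_i,\overline{z_i})^{\mu_i} - \|w\|^2\Big)^{\alpha}
= \prod_{i=1}^k N_{\Omega_i}(z_i,\overline{z_i})^{\mu_i\alpha}\Big(1 - \frac{\|w\|^2}{\prod_{i=1}^k N_{\Omega_i}(z_i,\overline{z_i})^{\mu_i}}\Big)^{\alpha}.
\]
First I would simply multiply this expression by the right-hand side of \eqref{eq2.26.1}. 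The factor $\prod_{i=1}^k N_{\Omega_i}(z_i,\overline{z_i})^{\mu_i\alpha}$ cancels exactly against $\prod_{i=1}^k\big(N_{\Omega_i}(z_i,\overline{z_i})^{-1}\big)^{\mu_i\alpha}$ appearing in the Bergman kernel, and the powers of $\big(1 - \|w\|^2/\prod_i N_{\Omega_i}^{\mu_i}\big)$ combine: the factor coming from $\exp\{-\alpha\Phi\}$ contributes exponent $\alpha$, while the $j$-th summand of the kernel carries exponent $-(\alpha-d+j)$, so the product carries exponent $\alpha - (\alpha - d + j) = d - j$. This yields precisely \eqref{eq2.37}, with the sum over $j$ from $0$ to $d$, the coefficients $D^j\widetilde\chi(d)/j!$, the Pochhammer factor $(\alpha-n)_{j+d_0}$, and the overall constant $1/\prod_{i=1}^k\mu_i^{d_i}$, all carried over unchanged from \eqref{eq2.26.1}.

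The remaining points to address are essentially bookkeeping. I should note that the hypothesis $\alpha > \max\{n, (p_1-1)/\mu_1, \ldots, (p_k-1)/\mu_k\}$ is exactly the one under which Theorem \ref{Th:a.2} (hence Theorem \ref{Th:2.2}) produces a well-defined, nonzero Hilbert space $\mathcal{H}_\alpha$ with the stated reproducing kernel, so $\varepsilon_\alpha$ is legitimately defined and the identity $\varepsilon_\alpha = e^{-\alpha\Phi}K_\alpha$ applies. I would also remark that, since $\Phi$ is a globally defined K\"ahler potential for $g(\mu)$ on the generalized Cartan-Hartogs domain, the $\varepsilon$-function computed this way is independent of the choice of potential, as already observed in the introduction after \eqref{eq1.4}.

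This proof is short and contains no real obstacle: all of the substantive work — the irreducible decomposition of $\mathcal{H}_\alpha$ under $\mathcal{K}_0\times\cdots\times\mathcal{K}_k$, the Hua integral evaluations, and the application of Lemma \ref{Le:2.1.1} to collapse the differential operator $\prod_i\chi_i(\mu_i(\alpha + t\frac{d}{dt}) - p_i)$ into the finite sum $\sum_{j=0}^d \frac{D^j\widetilde\chi(d)}{j!}(\alpha-d)_j(1-tz)^{-j}$ — has already been carried out in Section 2. The one place demanding a little care is the algebraic identity $(\alpha-n)_{d_0}(\alpha-d)_j = (\alpha-n)_{d_0+j}$ (already recorded as \eqref{eq2.32}, using $n = d + d_0$), which ensures the Pochhammer symbols line up as $(\alpha-n)_{j+d_0}$ rather than as a product of two separate factors; beyond that, the proof is a one-line substitution.
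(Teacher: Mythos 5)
Your proposal is correct and is essentially identical to the paper's own (two-line) proof: the paper also obtains \eqref{eq2.37} by multiplying the explicit kernel \eqref{eq2.26.1} by $e^{-\alpha\Phi}$, cancelling the factor $\prod_i N_{\Omega_i}(z_i,\overline{z_i})^{\mu_i\alpha}$ and combining the exponents of $\bigl(1-\|w\|^2/\prod_i N_{\Omega_i}^{\mu_i}\bigr)$ to get $d-j$. Your additional remarks about the range of $\alpha$ and the identity $(\alpha-n)_{d_0}(\alpha-d)_j=(\alpha-n)_{d_0+j}$ are consistent with what is already established in Section 2.
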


\begin{proof}[Proof]
By \eqref{eq2.26.1} and
\begin{equation*}
   \varepsilon_{\alpha}(z_1,\ldots,z_k,w):= e^{-\alpha\Phi(z_1,\ldots,z_k,w)} K_{\alpha}(z_1,\ldots,z_k,w;\overline{z_1},\ldots,\overline{z_k},\overline{w}),
\end{equation*}
we obtain \eqref{eq2.37}.
\end{proof}

\begin{Corollary}\label{Co:2.1}{For $k\geq 2$, the coefficients $a_1$ and $a_2$ of the expansion of the Rawnsley's $\varepsilon$-function $\varepsilon_{\alpha}$, that is, the coefficients of $\alpha^{n-1}$ and $ \alpha^{n-2}$ in \eqref{eq2.37} respectively,  are given by
\begin{equation}\label{eq2.38}
   a_1(z_1,\ldots,z_k,w)=\frac{1}{\prod_{i=1}^k\mu_i^{d_i}}\frac{D^{d-1}\widetilde{\chi}(d)}{(d-1)!}
   \Big(1-\frac{\|w\|^2}{\prod_{i=1}^kN_{\Omega_i}(z_i,\overline{z_i})^{\mu_i}}\Big)-\frac{n(n+1)}{2},
\end{equation}
\begin{eqnarray}
\nonumber  a_2(z_1,\ldots,z_k,w) &=& \frac{1}{\prod_{i=1}^k\mu_i^{d_i}}\frac{D^{d-2}\widetilde{\chi}(d)}{(d-2)!}
\Big(1-\frac{\|w\|^2}{\prod_{i=1}^kN_{\Omega_i}(z_i,\overline{z_i})^{\mu_i}}\Big)^2  \\
\nonumber           &
&-\frac{1}{\prod_{i=1}^k\mu_i^{d_i}}\frac{D^{d-1}\widetilde{\chi}(d)}{(d-1)!}\Big(\frac{n(n+1)}{2}-1\Big)
\Big(1-\frac{\|w\|^2}{\prod_{i=1}^kN_{\Omega_i}(z_i,\overline{z_i})^{\mu_i}}\Big) \\
\label{eq2.39} & &+\frac{1}{24}(n-1)n(n+1)(3n+2).
\end{eqnarray}
 }\end{Corollary}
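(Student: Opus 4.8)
The plan is to extract the coefficients of $\alpha^{n-1}$ and $\alpha^{n-2}$ directly from the closed form \eqref{eq2.37} for $\varepsilon_{\alpha}$. Write $u := 1 - \|w\|^2/\prod_{i=1}^k N_{\Omega_i}(z_i,\overline{z_i})^{\mu_i}$, so that \eqref{eq2.37} reads $\varepsilon_{\alpha} = \big(\prod_i \mu_i^{d_i}\big)^{-1}\sum_{j=0}^{d} \frac{D^j\widetilde{\chi}(d)}{j!}\, u^{d-j}(\alpha-n)_{j+d_0}$. The key observation is that $(\alpha-n)_{j+d_0}$ is a polynomial in $\alpha$ of degree exactly $j+d_0$, so the summand indexed by $j$ contributes only to powers $\alpha^{\le j+d_0}$. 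Since $n = d + d_0$, the top power $\alpha^n$ comes solely from $j=d$ (with $D^d\widetilde\chi(d)/d! = \prod_j\mu_j^{d_j}$, recovering $a_0=1$, which serves as a consistency check); the power $\alpha^{n-1}$ receives contributions from $j = d$ and $j = d-1$; and $\alpha^{n-2}$ from $j=d,\,d-1,\,d-2$.

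The main computational step is therefore to expand the relevant Pochhammer symbols. Using the elementary expansions of the raising factorial in terms of (unsigned) Stirling numbers, one has $(\alpha - n)_{m} = \alpha^{m} - \left(mn - \binom{m}{2}\right)\alpha^{m-1} + \left[\binom{m}{2}\binom{n}{2}\text{-type terms}\right]\alpha^{m-2} + \cdots$; concretely I would record that the coefficient of $\alpha^{m-1}$ in $(\alpha-n)_m$ is $\binom{m}{2} - mn = -\frac{m(2n-m+1)}{2}$, and compute the coefficient of $\alpha^{m-2}$ similarly (a quadratic in $n$ and $m$). Applying this with $m = n = d+d_0$ to the $j=d$ term, $m = n-1$ to the $j=d-1$ term, and $m = n-2$ to the $j=d-2$ term, then collecting powers of $\alpha$, yields \eqref{eq2.38} and \eqref{eq2.39}. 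The $u$-independent constants $-\tfrac{n(n+1)}{2}$ in $a_1$ and $\tfrac{1}{24}(n-1)n(n+1)(3n+2)$ in $a_2$ arise purely from the $j=d$ term (where the $u$-power is $u^0=1$ and $D^d\widetilde\chi(d)/d! = \prod\mu_i^{d_i}$ cancels the prefactor), using $\binom{n}{2} - n\cdot n = -\tfrac{n(n+1)}{2}$ and the corresponding $\alpha^{n-2}$-coefficient of $(\alpha-n)_n$.

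I expect the only real obstacle to be bookkeeping: correctly identifying which $j$-indices feed into each target power of $\alpha$ (this hinges on the degree count $\deg_\alpha (\alpha-n)_{j+d_0} = j+d_0$ together with $n = d+d_0$), and accurately computing the second subleading coefficient of a Pochhammer symbol. One should also double-check the $d=1$ edge case of \eqref{eq2.39}, where the $j=d-2$ term is absent and $D^{d-2}\widetilde\chi$ is to be read as vanishing. No deep input is needed beyond Theorem \ref{Th:2.3}; the argument is a finite, elementary extraction of polynomial coefficients.
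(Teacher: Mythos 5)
Your proposal is correct and follows essentially the same route as the paper: substitute the polynomial expansion of the Pochhammer symbols $(\alpha-n)_{j+d_0}$ into \eqref{eq2.37}, note that only $j=d, d-1$ (resp.\ $j=d,d-1,d-2$) feed the $\alpha^{n-1}$ (resp.\ $\alpha^{n-2}$) coefficient, and use $D^d\widetilde{\chi}(d)/d!=\prod_i\mu_i^{d_i}$ together with the elementary symmetric functions of $\{1,\dots,n\}$, exactly as in the paper's computation of the constants $c_{n,n-1}$, $c_{n-1,n-2}$, $c_{n,n-2}$. (Your worry about a $d=1$ edge case is moot, since $k\geq 2$ forces $d\geq 2$.)
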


\begin{proof}[Proof] Write
\begin{equation}\label{eq2.40}
    (\alpha-n)_{d_0+l}=\sum_{j=0}^{d_0+l}c_{d_0+l,j}\alpha^j.
\end{equation}
Substituting \eqref{eq2.40} into \eqref{eq2.37},  we obtain
\begin{equation}\label{eq2.41}
   \varepsilon_{\alpha}(z_1,\ldots,z_k,w)=\sum_{j=0}^{n}\alpha^j\sum_{l=\max(j-d_0,0)}^d
  \frac{c_{d_0+l,j}}{\prod_{i=1}^k\mu_i^{d_i}}
  \frac{D^l\widetilde{\chi}(d)}{l!}\Big(1-\frac{\|w\|^2}{\prod_{i=1}^kN_{\Omega_i}(z_i,\overline{z_i})^{\mu_i}}\Big)^{d-l},
\end{equation}
which implies
\begin{equation}\label{eq2.42}
a_j(z_1,\ldots,z_k,w)=\sum_{l=\max(d-j,0)}^d
\frac{c_{d_0+l,n-j}}{\prod_{i=1}^k\mu_i^{d_i}}\frac{D^l\widetilde{\chi}(d)}{l!}\Big(1-\frac{\|w\|^2}{\prod_{i=1}^kN_{\Omega_i}(z_i,\overline{z_i})
^{\mu_i}}\Big)^{d-l}.
\end{equation}
From
\begin{eqnarray*}
  (\alpha-n)_{n}   &=& \prod_{k=1}^{n}(\alpha-k), \\
  (\alpha-n)_{n-1} &=& \prod_{k=2}^{n}(\alpha-k), \\
  (\alpha-n)_{n-2} &=& \prod_{k=3}^{n}(\alpha-k),
\end{eqnarray*}
we have
\begin{equation}\label{eq2.44}
c_{n-1,n-1}=c_{n-2,n-2}=1,
\end{equation}
\begin{equation}\label{eq2.45}
c_{n,n-1}=-\sum_{k=1}^{n}k=-\frac{n(n+1)}{2},
\end{equation}
\begin{equation}\label{eq2.46}
c_{n-1,n-2}=-\sum_{k=2}^{n}k=-\frac{n(n+1)}{2}+1,
\end{equation}
\begin{eqnarray}
\nonumber  c_{n,n-2} &=& \sum_{1\leq i<j\leq
                                 n}ij=\frac{1}{2}\left\{\left(\sum_{k=1}^{n}k\right)^2-\sum_{k=1}^{n}k^2\right\} \\
\label{eq2.47}&=&\frac{1}{24}(n-1)n(n+1)(3n+2).
\end{eqnarray}
Substituting \eqref{eq2.44}, \eqref{eq2.45}, \eqref{eq2.46} and
\eqref{eq2.47} into \eqref{eq2.42}, we obtain \eqref{eq2.38} and
\eqref{eq2.39}.
\end{proof}

In order to calculate $D^{d-1}\widetilde{\chi}$ and
$D^{d-2}\widetilde{\chi}$, we need Lemma \ref{Le:2.8} and
\ref{Le:2.9} below.

\begin{Lemma}\textup{(see \cite{FT})}\label{Le:2.8}{
Write $\prod_{j=1}^r\left(\mu
   x-p+1+(j-1)\frac{a}{2}\right)_{1+b+(r-j)a}=\sum_{j=0}^dc_jx^{d-j}$. Then
\begin{equation}\label{eq2.49}
    c_0=\mu^d,
\end{equation}
\begin{equation}\label{eq2.50}
   c_1=-\frac{1}{2}\mu^{d-1}dp,
\end{equation}
\begin{equation}\label{eq2.51}
  c_2= \frac{1}{2}\mu^{d-2}R(\Omega),
\end{equation}
where
\begin{eqnarray}
\nonumber R(\Omega)  &=&\frac{d^2p^2}{4}-\frac{r(p-1)p(2p-1)}{6}+\frac{r(r-1)a(3p^2-3p+1)}{12} \\
\label{eq2.51.1}&&-\frac{(r-1)r(2r-1)a^2(p-1)}{24}+\frac{r^2(r-1)^2a^3}{48}.
\end{eqnarray}
 }\end{Lemma}

\begin{Lemma}\textup{(see \cite{FT})}\label{Le:2.9}{
For any  polynomial $f(x)$ in real variable $x$, take
$Df(x):=f(x)-f(x-1)$. Let $A_d=D^{d-1}x^d$, $B_d=D^{d-2}x^d$. Then
we have
\begin{eqnarray}
\label{eq2.56} A_d  &=& \frac{d!}{2}(2x-d+1)\quad  (d\geq 1),  \\
\label{eq2.57} B_d  &=&
\frac{d!}{24}\left\{12x^2-12(d-2)x+3d^2-11d+10\right\}\quad (d\geq
2).
\end{eqnarray}
 }\end{Lemma}

Lemma \ref{Le:2.8} and Lemma \ref{Le:2.9} imply the following results.
\begin{Lemma}\label{Le:2.10}{
Suppose that $k=2$, $d=d_1+d_2$, $D^{d-1}\widetilde{\chi}(d)$ and
$D^{d-2}\widetilde{\chi}(d)$ are defined by \eqref{eq2.29} and \eqref{eq2.29.0}. Then
we have
\begin{equation}\label{eq2.61}
   \frac{1}{\mu_1^{d_1}\mu_2^{d_2}} \frac{D^{d-1}\widetilde{\chi}(d)}{(d-1)!}=\frac{1}{2}\left\{d(d+1)-\left(\frac{d_1p_1}
   {\mu_1}+\frac{d_2p_2}{\mu_2}\right)\right\},
\end{equation}
\begin{eqnarray}
\nonumber \frac{1}{\mu_1^{d_1}\mu_2^{d_2}}     \frac{D^{d-2}\widetilde{\chi}(d)}{(d-2)!} &=& \frac{1}{4}\left\{\frac{1}{6}(d-1)d(d+1)(3d+10)-(d-1)(d+2)\left(\frac{d_1p_1}{\mu_1}+\frac{d_2p_2}{\mu_2}\right)\right. \\
\label{eq2.62}   & & +
  \left.2\frac{R(\Omega_1)}{\mu_1^2}+\frac{d_1d_2p_1p_2}{\mu_1\mu_2}+2\frac{R(\Omega_2)}{\mu_2^2}\right\},
\end{eqnarray}
where
\begin{eqnarray}
\nonumber R(\Omega_i) &=& \frac{d_i^2p_i^2}{4}-\frac{r_i(p_i-1)p_i(2p_i-1)}{6}+\frac{r_i(r_i-1)a_i(3p_i^2-3p_i+1)}{12} \\
 \label{eq2.63}           & & -\frac{(r_i-1)r_i(2r_i-1)a_i^2(p_i-1)}{24}+\frac{r_i^2(r_i-1)^2a_i^3}{48}\;\;\;(1\leq i\leq 2).
\end{eqnarray}
 }\end{Lemma}

\begin{proof}[Proof]
Let $\widetilde{\chi}(x)=c_0x^d+c_1x^{d-1}+c_2x^{d-2}+\cdots+c_n$.
From Lemma \ref{Le:2.8}, we get
\begin{equation}\label{eq2.64.1}
  c_0=\mu_1^{d_1}\mu_2^{d_2},
\end{equation}
\begin{equation}\label{eq2.64.2}
  c_1=-\frac{1}{2}\mu_1^{d_1}\mu_2^{d_2}\left(\frac{d_1p_1}{\mu_1}+\frac{d_2p_2}{\mu_2}\right),
\end{equation}
\begin{equation}\label{eq2.64.3}
  c_2=\frac{1}{4}\mu_1^{d_1}\mu_2^{d_2}\left(2\frac{R(\Omega_1)}{\mu_1^2}+\frac{d_1d_2p_1p_2}{\mu_1\mu_2}+2\frac{R(\Omega_2)}{\mu_2^2}\right),
\end{equation}
where $R(\Omega_i)$ is defined by \eqref{eq2.63} $(1\leq i\leq 2)$.
By \begin{equation}\label{eq2.64}
    \left\{\begin{array}{cl}
          D^{d-1}\widetilde{\chi}(x) =   & c_0A_d(x)+c_1(d-1)!,\\
          D^{d-2}\widetilde{\chi}(x) =   &
          c_0B_d(x)+c_1A_{d-1}(x)+c_2(d-2)!,
           \end{array}
    \right.
\end{equation}
letting $x=d$ and substituting \eqref{eq2.64.1}, \eqref{eq2.64.2},
\eqref{eq2.64.3}, \eqref{eq2.56} and \eqref{eq2.57} into
\eqref{eq2.64}, we obtain \eqref{eq2.61} and \eqref{eq2.62}.
\end{proof}

\setcounter{equation}{0}
\section{The proof of Theorem \ref{Th:2}}

(i) From the proof of Theorem \ref{Th:a.2}, we know that the space
$\mathcal{H}_{\alpha}\neq \{0\}$ if and only if
$$\alpha>\max\{n,\frac{p_1-1}{\mu_1},\ldots,\frac{p_k-1}{\mu_k}\}.$$

Using \eqref{eq2.37}, we obtain that
$\varepsilon_{\alpha}(z_1,\ldots,z_k,w)$ is a constant with respect
to $(z_1,\ldots,z_k,w)$ if and only if
\begin{equation*}
  D^j\widetilde{\chi}(d)=0 \;\;(0\leq j\leq d-1).
\end{equation*}
This is equivalent to
\begin{equation*}
  \widetilde{\chi}(x)= \sum_{j=0}^d\frac{D^j\widetilde{\chi}(d)}{j!}(x-d)_j=\frac{D^d\widetilde{\chi}(d)}{d!}(x-d)_d=
  \prod_{j=1}^k\mu_j^{d_j}\cdot\prod_{j=1}^d(x-j),
\end{equation*}
which implies \eqref{eq1.7} by the definition of
$\widetilde{\chi}(x)$ (see \eqref{eq2.29}).

(ii) Since there is no multiple divisor for the polynomial
$\prod_{j=1}^d(x-j)$, by \eqref{eq1.7}, we obtain that each
polynomial
$$\chi_i(\mu_ix-p_i)=\prod_{j=1}^{r_i}\left(\mu_ix-p_i+1+(j-1)\frac{a_i}{2}\right)_{1+b_i+(r_i-j)a_i}$$
has no multiple divisor, namely each $\chi_i(x)$ has no multiple
divisor.

(1) For the irreducible bounded symmetric domain $\Omega_I(m,n)\;
(1\leq m\leq n)$, its rank $r=m$, the characteristic multiplicities
$a=2, b=n-m$. Since
\begin{equation}\label{eq5.1}
\chi(x)=\prod_{j=1}^m(x+j)_{m+n+1-2j},
\end{equation}
it is easy to see that $\chi(x)$ has no multiple divisor iff
$r=m=1$.

(2) For the irreducible bounded symmetric domain  $\Omega_{II}(2n)\;
(n\geq 3)$, its rank $r=n$, the characteristic multiplicities $a=4,
b=0$. The polynomial
\begin{equation}\label{eq5.2}
 \chi(x)=\prod_{j=1}^n(x-1+2j)_{4n+1-4j}
\end{equation}
has multiple divisors $x+3$.

For the irreducible bounded symmetric domain
$\Omega_{II}(2n+1)\;(n\geq 2) $, its rank $r=n$, the characteristic
multiplicities $a=4, b=2$. The polynomial
\begin{equation}\label{eq5.3}
\chi(x)=\prod_{j=1}^n(x-1+2j)_{4n+3-4j}
\end{equation}
has multiple divisors $x+3$.

 (3) For the irreducible bounded symmetric domain $\Omega_{III}(n)\;(n\geq 2) $, its rank
$r=n$, the characteristic multiplicities  $a=1, b=0$. When $n\geq
3$,
\begin{equation}\label{eq5.4}
\chi(x)=\prod_{j=1}^n\left(x+\frac{1+j}{2}\right)_{n+1-j}
\end{equation}
has multiple divisors $x+2$. When $n=2$,
\begin{equation}\label{eq5.5}
\chi(x)=(x+1)(x+2)\left(x+\frac{3}{2}\right)
\end{equation}
has not any multiple divisor.

(4) For the irreducible bounded symmetric domain
$\Omega_{IV}(n)\;(n\geq 5) $, its rank
 $r=2$, the characteristic multiplicities $a=n-2, b=0$. From
\begin{equation}\label{eq5.6}
\chi(x)=(x+1)_{n-1}\left(x+\frac{n}{2}\right),
\end{equation}
we obtain that $\chi(x)$ has no multiple divisor if and only if $n$
is odd.

(5) For the irreducible bounded symmetric domain
$\Omega_{\textrm{V}}(16) $, its rank $r=2$, the characteristic
multiplicities $a=6, b=4$. By
\begin{equation}\label{eq5.7}
 \chi(x)=(x+1)_{11}(x+4)_5,
\end{equation}
we get that $\chi(x)$ has multiple divisors.

 (6) For the irreducible bounded symmetric domain  $\Omega_{\textrm{VI}}(27) $,
 its rank
$r=3$, the characteristic multiplicities $a=8, b=0$. The polynomial
\begin{equation}\label{eq5.8}
  \chi(x)=(x+1)_{17}(x+5)_{9}(x+9)
\end{equation}
has multiple divisors.

 In summary, we have that if the metric
$\alpha g(\mu)$  on
$\left(\prod_{j=1}^k\Omega_j\right)^{{\mathbb{B}}^{d_0}}(\mu)$  is
balanced, then $\Omega_i$ must be  biholomorphic to one of
$\Omega_{I}(1,n)$, $\Omega_{III}(2)$ or $\Omega_{IV}(m)~(m\geq 5~$
and $m \text{ are odd} )$.

(iii) Using Theorem \ref{Th:2}(i), it is easy to show that the
metrics $\alpha g(\mu)$ on
$$\left({\mathbb{B}}^d\times {\mathbb{B}}\right)^{{\mathbb{B}}^{d_0}}\left(1,\frac{1}{d+1}\right)$$ and $$\left({\mathbb{B}}\times\Omega_{III}(2)\right)^{{\mathbb{B}}^{d_0}}\left(1,\frac{1}{2}\right)$$
are balanced.

Now suppose that $\alpha g(\mu)$ on $(\Omega_1\times
\Omega_2)^{\mathbb{B}^{d_0}}(\mu)$ is balanced, by Theorem
\ref{Th:2}(ii),
 we have
that $\Omega_j$ $(j=1,2)$ must be biholomorphic to one of
$\Omega_{I}(1,n)$, $\Omega_{III}(2)$ or $\Omega_{IV}(m)~(m\geq 5$
and $m$ are odd).

(1) When $(\Omega_1,\Omega_2)=(\mathbb{B}^{d_1},\mathbb{B}^{d_2})$,
using \eqref{eq1.7}, we get
\begin{equation}\label{eq6.1}
\prod_{j=1}^{d_1}\left(x-\frac{j}{\mu_1}\right)\cdot
\prod_{j=1}^{d_2}\left(x-\frac{j}{\mu_2}\right)
=\prod_{j=1}^{d_1+d_2}(x-j).
\end{equation}

This means that  $\frac{j}{\mu_1},\; \frac{l}{\mu_2}$  $(1\leq j\leq
d_1, 1\leq l\leq d_2, j,l\in \mathbb{N})$ are zeros of the
polynomial $\prod_{j=1}^{d_1+d_2}(x-j)$. Thus $\frac{1}{\mu_1}$ and
$\frac{1}{\mu_2}$ are integers. Assume $d_1\geq d_2$. Since
$\frac{d_1}{\mu_1}$ is a zero of the polynomial
$\prod_{j=1}^{d_1+d_2}(x-j)$, we get that $\frac{d_1}{\mu_1}\leq
d_1+d_2\leq 2d_1,$ and thus we have $1\leq \frac{1}{\mu_1}\leq 2$.

If $\frac{1}{\mu_1}=1$, by \eqref{eq6.1}, it follows that
$$\frac{d_2}{\mu_2}=d_1+d_2,\;\; \frac{1}{\mu_2}=d_1+1.$$
So $(d_1-2)(d_2-1)=0$. If $d_2\geq 2$, then $d_1=d_2=2$ and
$\frac{1}{\mu_2}=3$. But, in this case, the equation \eqref{eq6.1}
is not valid. Therefore, if $\frac{1}{\mu_1}=1$, then $d_2=1$ and
$\frac{1}{\mu_2}=d_1+1$.

If $\frac{1}{\mu_1}=2$, since the number of even zeros of the left
side of \eqref{eq6.1} is larger than or equal to $d_1$ and the
number of even zeros is equal to $\left[\frac{d_1+d_2}{2}\right]$
for the right side of \eqref{eq6.1}, where $[n]$ denotes the
greatest integer which is less than or equal to $n$,  we get
$$\frac{d_1+d_2}{2}\geq \left[\frac{d_1+d_2}{2}\right]\geq d_1.$$
This yilds $d_2\geq d_1$. From the assumption $d_1\geq d_2$, we have
$d_1=d_2$. If $d_1=d_2>1$, since numbers of odd zeros of both sides
of \eqref{eq6.1} are not equal, this leads to a contradiction. If
$d_1=d_2=1$, using \eqref{eq6.1}, we obtain $\frac{1}{\mu_2}=1$.

(2) When $(\Omega_1,\Omega_2)=(\mathbb{B}^{d_1},\Omega_{III}(2))$,
applying \eqref{eq1.7}, we obtain
\begin{equation}\label{eq6.2}
  \prod_{j=1}^{d_1}\left(x-\frac{j}{\mu_1}\right)\cdot\left(x-\frac{1}{\mu_2}\right)\left(x-\frac{2}{\mu_2}\right)\left(x-\frac{3}{2\mu_2}\right)=\prod_{j=1}^{d_1+3}(x-j).
\end{equation}
This means that $\frac{j}{\mu_1}(1\leq j\leq
d_1),\frac{1}{\mu_2},\frac{2}{\mu_2}$ and $\frac{3}{2\mu_2}$ are
zeros of the polynomial $\prod_{j=1}^{d_1+3}(x-j)$. So
$\frac{1}{\mu_1}, \frac{1}{\mu_2}$ and $\frac{3}{2\mu_2}$ are
integers, and thus $\frac{1}{\mu_2}=2t$ (i.e., $\frac{1}{\mu_2}$ is
even). Since $ \prod_{j=1}^{d_1}\left(x-\frac{j}{\mu_1}\right) $ is
divisible by $x-1$, we get $\frac{1}{\mu_1}=1$ and so
\begin{equation*}
  (x-2t)(x-3t)(x-4t)=(x-d_1-1)(x-d_1-2)(x-d_1-3).
\end{equation*}
Thus we have $t=1,\; d_1=1$.

(3) When $(\Omega_1,\Omega_2)=(\mathbb{B}^{d_1},\Omega_{IV}(d_2))$
(where $d_2\;\geq 5$ are odd), \eqref{eq1.7} implies
\begin{equation}\label{eq6.3}
 \prod_{j=1}^{d_1}\left(x-\frac{j}{\mu_1}\right)\cdot \prod_{j=1}^{d_2-1}\left(x-\frac{j}{\mu_2}\right)\cdot \left(x-\frac{d_2}{2\mu_2}\right)=
 \prod_{j=1}^{d_1+d_2}(x-j).
\end{equation}
This implies that $\frac{1}{\mu_1}, \frac{1}{\mu_2}$ and
$\frac{d_2}{2\mu_2}$ are integers (where $d_2\;\geq 5$ are odd). So
$\frac{1}{\mu_1}$ is an integer and $\frac{1}{\mu_2}$ is even. From
the number of even zeros of the left side of \eqref{eq6.3} is
greater than or equal to $\left[\frac{d_1}{2}\right]+d_2-1$ and the
number of even zeros is equal to $\left[\frac{d_1+d_2}{2}\right]$
for the right side of \eqref{eq6.3}, we have
\begin{equation*}
 \left[\frac{d_1+d_2}{2}\right]\geq \left[\frac{d_1}{2}\right]+d_2-1.
\end{equation*}
Since
\begin{equation*}
  \frac{d_1+d_2}{2}-\left(\frac{d_1}{2}-1\right)\geq \left[\frac{d_1+d_2}{2}\right]-\left[\frac{d_1}{2}\right],
\end{equation*}
we have $d_2\leq 4$, this conflicts with $d_2\geq 5$.

(4) When $(\Omega_1,\Omega_2)=(\Omega_{III}(2),\Omega_{III}(2))$,
by \eqref{eq1.7}, we obtain
\begin{equation}\label{eq6.4}
 \left(x-\frac{1}{\mu_1}\right)\left(x-\frac{2}{\mu_1}\right)\left(x-\frac{3}{2\mu_1}\right) \left(x-\frac{1}{\mu_2}\right)\left(x-\frac{2}{\mu_2}\right)\left(x-\frac{3}{2\mu_2}\right)=\prod_{j=1}^{6}(x-j).
\end{equation}
This implies that $\frac{1}{\mu_1}, \frac{1}{\mu_2},
\frac{3}{2\mu_1}$ and $\frac{3}{2\mu_2}$ are integers. So
$\frac{1}{\mu_1}$ and $\frac{1}{\mu_2}$ are even. Since the number
of even zeros of the left side of \eqref{eq6.4} is greater than or
equal to 4 and the number of even zeros of the right side of
\eqref{eq6.4}  is equal to 3, this leads to a contradiction.

(5) When $(\Omega_1,\Omega_2)=(\Omega_{III}(2),\Omega_{IV}(d_2))$,
$d_2\geq 5$ and  $d_2$ is odd,  by \eqref{eq1.7}, we have
\begin{equation}\label{eq6.5}
   \left(x-\frac{1}{\mu_1}\right)\left(x-\frac{2}{\mu_1}\right)\left(x-\frac{3}{2\mu_1}\right)\cdot
   \prod_{j=1}^{d_2-1}\left(x-\frac{j}{\mu_2}\right)\cdot \left(x-\frac{d_2}{2\mu_2}\right)= \prod_{j=1}^{3+d_2}(x-j).
\end{equation}
Then $\frac{1}{\mu_1},\frac{3}{2\mu_1},\frac{1}{\mu_2}$ and
$\frac{d_2}{2\mu_2}$ are zeros of $ \prod_{j=1}^{3+d_2}(x-j)$. So
$\frac{1}{\mu_1}$ and $\frac{1}{\mu_2}$ are even. In view of the
number of even zeros of the left side of \eqref{eq6.5} is greater
than or equal to $d_2+1$ and the number of even zeros of the right
side of \eqref{eq6.5} is equal to $\left[\frac{3+d_2}{2}\right]$, we
get
\begin{equation*}
  \frac{3+d_2}{2}\geq \left[\frac{3+d_2}{2}\right]\geq d_2+1,
\end{equation*}
which means  $d_2\leq 1$, which conflicts with $d_2\geq 5$.

(6) When $(\Omega_1,\Omega_2)=(\Omega_{IV}(d_1),\Omega_{IV}(d_2))$,
$d_1\geq 5$, $d_2\geq 5$ and $d_1,d_2$ are odd, by \eqref{eq1.7}, we
have
\begin{equation}\label{eq6.6}
  \prod_{j=1}^{d_1-1}\left(x-\frac{j}{\mu_1}\right)\cdot \left(x-\frac{d_1}{2\mu_1}\right)\cdot \prod_{j=1}^{d_2-1}\left(x-\frac{j}{\mu_2}\right)
  \cdot\left(x-\frac{d_2}{2\mu_2}\right)= \prod_{j=1}^{d_1+d_2}(x-j).
\end{equation}
Then $\frac{1}{\mu_1},\frac{d_1}{2\mu_1},\frac{1}{\mu_2}$ and
$\frac{d_2}{2\mu_2}$ are zeros of $ \prod_{j=1}^{d_1+d_2}(x-j)$. So
$\frac{1}{\mu_1}$ and $\frac{1}{\mu_2}$ are even. Since the number
of even zeros of the left side of \eqref{eq6.6} is greater than or
equal to $d_1+d_2-2$ and the number of even zeros of the right side
of \eqref{eq6.6} is equal to $\left[\frac{d_1+d_2}{2}\right]$, we
have
\begin{equation*}
  \frac{d_1+d_2}{2}\geq \left[\frac{d_1+d_2}{2}\right]\geq d_1+d_2-2.
\end{equation*}
Therefore $d_1+d_2\leq 4$, which conflicts with $d_1\geq 5$ and
$d_2\geq 5$.

Combing the above results (1)-(6), we obtain that if $\alpha g(\mu)$
on $(\Omega_1\times \Omega_2)^{\mathbb{B}^{d_0}}(\mu)$ is balanced,
then $(\Omega_1\times \Omega_2)^{\mathbb{B}^{d_0}}(\mu)$ is
biholomorphic to
$$\left({\mathbb{B}}^d\times
{\mathbb{B}}\right)^{{\mathbb{B}}^{d_0}}\left(1,\frac{1}{d+1}\right)\;\;\text{or}\;\;
\left({\mathbb{B}}\times\Omega_{III}(2)\right)^{{\mathbb{B}}^{d_0}}\left(1,\frac{1}{2}\right).$$

 \setcounter{equation}{0}
\section{The proof of Theorem \ref{Th:1}}

In order to proof Theorem \ref{Th:1}, we need Lemma \ref{Le:3.1} and
\ref{Le:3.2} below.

\begin{Lemma}\label{Le:3.1}{Let $\Omega$ be an irreducible bounded symmetric domain in
$\mathbb{C}^{d}$ in its Harish-Chandra realization, and denote the
rank $r$, the characteristic multiplicities $a, b$, the dimension
$d$, the genus $p$, and the Hua polynomial $\chi$ of  $\Omega$. Then
\begin{equation}\label{eq4.1}
  \frac{1}{2}-\frac{8}{3(4d+1)}< S(\Omega)\leq \frac{1}{2}-\frac{1}{2d},
\end{equation}
where $S(\Omega)=\frac{2R(\Omega)}{d^2p^2}$ (see \eqref{eq2.51.1}
for $R(\Omega)$).}\end{Lemma}

\begin{proof}[Proof]
Let $x_1,x_2,\ldots,x_d$ be zeros of the polynomial $\chi(x-p)$. By
\eqref{1.3}, we know that $x_1,x_2,\ldots,x_d$ are real numbers.
From Lemma \ref{Le:2.8}, we have
\begin{equation}\label{eq4.2}
  \sum_{i=1}^dx_i=\frac{1}{2}dp,
\end{equation}
\begin{eqnarray}
\nonumber \sum_{i=1}^dx_i^2   &=& \frac{r(p-1)p(2p-1)}{6}-\frac{r(r-1)a(3p^2-3p+1)}{12} \\
\label{eq4.3}   & &
+\frac{(r-1)r(2r-1)a^2(p-1)}{24}-\frac{r^2(r-1)^2a^3}{48},
\end{eqnarray}
and
\begin{equation}\label{eq4.4}
 S(\Omega)=\frac{1}{2}-\frac{2\sum_{i=1}^dx_i^2}{d^2p^2}.
\end{equation}

From \eqref{eq4.2} and
\begin{equation*}
 \left(\sum_{i=1}^dx_i\right)^2\leq d\sum_{i=1}^dx_i^2,
\end{equation*}
we obtain
\begin{equation}\label{eq4.5}
 \frac{2\sum_{i=1}^dx_i^2}{d^2p^2}\geq \frac{1}{2d},
\end{equation}
which implies
\begin{equation*}
 S(\Omega)\leq \frac{1}{2}-\frac{1}{2d}.
\end{equation*}

Now we show
$$\frac{1}{2}-\frac{8}{3(4d+1)}< S(\Omega).$$
This is equivalent to
\begin{eqnarray}
\nonumber   &&  3(4d+1)\left\{\frac{r(p-1)p(2p-1)}{6}-\frac{r(r-1)a(3p^2-3p+1)}{12}\right. \\
\label{eq4.6}   & &
\left.+\frac{(r-1)r(2r-1)a^2(p-1)}{24}-\frac{r^2(r-1)^2a^3}{48}\right\}-4d^2p^2<
0.
\end{eqnarray}

We now calculate the left side of \eqref{eq4.6} by using the
classification of irreducible bounded symmetric domains.

(1) For the irreducible bounded symmetric domain $\Omega_I(m,n)\;
(1\leq m\leq n)$, its  rank $r=m$, the characteristic multiplicities
$a=2, b=n-m$, the dimension $d=mn$, the genus $p=m+n$.
\begin{equation}\label{eq4.7}
 \text{L.H.S of } \eqref{eq4.6}= -\frac{1}{2}mn(4m^2n^2-2m^2-2n^2+
 mn+1)\leq -\frac{1}{2}mn.
\end{equation}

(2) For the irreducible bounded symmetric domain  $\Omega_{II}(2n)\;
(n\geq 3) $, its rank $r=n$, the characteristic multiplicities $a=4,
b=0$, the dimension $d=n(2n-1)$, the genus $p=2(2n-1)$.
\begin{equation}\label{eq4.8}
 \text{L.H.S of } \eqref{eq4.6}=- n(32n^5 - 80n^4 + 60n^3 - 4n^2 - 9n +
 2)\leq -16n^5.
\end{equation}

For the irreducible bounded symmetric domain
$\Omega_{II}(2n+1)\;(n\geq 2) $, its  rank $r=n$, the characteristic
multiplicities $a=4, b=2$, the dimension $d=n(2n+1)$, the genus
$p=4n$.
\begin{equation}\label{eq4.9}
\text{L.H.S of } \eqref{eq4.6}= -n(2n + 1)(16n^4 - 10n^2 + 3n +
1)\leq -6n^5(2n+1).
\end{equation}

 (3) For the irreducible bounded symmetric domain $\Omega_{III}(n)\;(n\geq 2) $, its rank
$r=n$, the characteristic multiplicities $a=1, b=0$, the dimension
$d=n(n+1)/2$, the genus $p=n+1$.
\begin{equation}\label{eq4.10}
\text{L.H.S of } \eqref{eq4.6}= -\frac{1}{16}n(n + 1)(2n^4 + 8n^3 +
7n^2 - 5n - 4)\leq -\frac{1}{8}n^5(n + 1).
\end{equation}

(4) For the irreducible bounded symmetric domain
$\Omega_{IV}(n)\;(n\geq 5) $, its rank $r=2$, the characteristic
multiplicities $a=n-2, b=0$, the dimension $d=n$, the genus $p=n$.
\begin{equation}\label{eq4.11}
 \text{L.H.S of } \eqref{eq4.6}=-\frac{1}{4} n( 8n^2 - 5n - 2)\leq -\frac{1}{4} n^3.
\end{equation}

(5) For the irreducible bounded symmetric domain
$\Omega_{\textrm{V}}(16) $, its rank $r=2$, the characteristic
multiplicities $a=6, b=4$, the dimension $d=16$, the genus $p=12$.
\begin{equation}\label{eq4.12}
 \text{L.H.S of } \eqref{eq4.6}=-11736 .
\end{equation}

 (6) For the irreducible bounded symmetric domain  $\Omega_{\textrm{VI}}(27) $, its  rank
$r=3$, the characteristic multiplicities $a=8, b=0$, the dimension
$d=27$, the genus $p=18$.
\begin{equation}\label{eq4.13}
 \text{L.H.S of } \eqref{eq4.6}=-76599 .
\end{equation}

Combing the above (1)-(7), it follows that \eqref{eq4.6} holds. This
completes the proof.
\end{proof}

\begin{Lemma}\label{Le:3.2}{Let $\Omega_i$ be an irreducible bounded symmetric domain in
$\mathbb{C}^{d_i}$ in its Harish-Chandra realization, and denote the
rank $r_i$, the characteristic multiplicities $a_i, b_i$, the
dimension $d_i$ and the genus $p_i$ of $\Omega_i$, $1\leq i\leq 2$.
Assume that
\begin{equation}\label{eq4.14}
 d:=d_1+d_2,\;\;  A:=d(d+1),\;\;B:=d(d+1)\left\{(d-1)(d+2)-\frac{1}{6}(d-1)(3d+10)\right\}
\end{equation}
and
\begin{equation}\label{eq4.15}
  S(\Omega_1):=\frac{2R(\Omega_1)}{d_1^2p_1^2},\;\; S(\Omega_2):=\frac{2R(\Omega_2)}{d_2^2p_2^2}.
\end{equation}
Then
\begin{equation}\label{eq4.16}
  S(\Omega_1)+S(\Omega_2)<\frac{2B}{A^2}<1
\end{equation}
and
\begin{equation}\label{eq4.17}
  \frac{1}{1-2S(\Omega_1)}+\frac{1}{1-2S(\Omega_2)}>\frac{1}{1-\frac{2B}{A^2}}.
\end{equation}
 }\end{Lemma}

\begin{proof}[Proof]
By
\begin{equation*}
  d^2=(d_1+d_2)^2\geq 4d_1d_2,
\end{equation*}
we get
\begin{equation}\label{eq4.19}
  \frac{1}{2}\left(\frac{1}{d_1}+\frac{1}{d_2}\right)>\frac{2}{3}\frac{2d+1}{d(d+1)}.
\end{equation}
Using \eqref{eq4.1} and
\begin{equation}\label{eq4.18}
  \frac{B}{A^2}=\frac{1}{2}-\frac{2d+1}{3d(d+1)},
\end{equation}
 we obtain
\begin{equation*}
  S(\Omega_1)+S(\Omega_2)\leq 1-\frac{1}{2}\left(\frac{1}{d_1}+\frac{1}{d_2}\right)<1-\frac{2}{3}\frac{2d+1}{d(d+1)}=\frac{2B}{A^2}<1.
\end{equation*}

From \eqref{eq4.18}, we have
\begin{equation*}
  \frac{1}{1-\frac{2B}{A^2}}=\frac{3}{4}d+\frac{3}{8}-\frac{3}{8(2d+1)},
\end{equation*}
and combining with \eqref{eq4.1}, we obtain
\begin{equation*}
 \frac{1}{1-2S(\Omega_1)}+\frac{1}{1-2S(\Omega_2)}>\left(\frac{3}{4}d_1+\frac{3}{16}\right)+\left(\frac{3}{4}d_1+\frac{3}{16}\right)=\frac{3}{4}d+\frac{3}{8}>\frac{1}{1-\frac{2B}{A^2}}.
\end{equation*}
This completes the proof.
\end{proof}

\begin{proof}[The proof of Theorem \ref{Th:1}]

It follows from \eqref{eq2.39} that the coefficient $a_2$ of the
expansion of the function $\varepsilon_{\alpha}$ associated to
$((\Omega_1\times\Omega_2)^{{\mathbb{B}}^{d_0}}(\mu_1,\mu_2),
g(\mu_1,\mu_2)) $ is constant if and only if
\begin{equation}\label{eq4.20}
    \frac{D^{d-1}\widetilde{\chi}(d)}{(d-1)!}=
    \frac{D^{d-2}\widetilde{\chi}(d)}{(d-2)!}=0.
\end{equation}
From \eqref{eq2.61},  \eqref{eq2.62} and \eqref{eq2.63}, we get that
$a_2$ is constant if and only if
\begin{equation}\label{eq4.21}
  \left\{\begin{array}{l}
          \frac{d_1p_1}{\mu_1}+\frac{d_2p_2}{\mu_2}=d(d+1), \\
           2\frac{R(\Omega_1)}{\mu_1^2}+\frac{d_1d_2p_1p_2}{\mu_1\mu_2}+2\frac{R(\Omega_2)}{\mu_2^2}=d(d+1)\left\{(d-1)(d+2)-\frac{1}{6}(d-1)(3d+10)\right\}.
         \end{array}
  \right.
\end{equation}

Let \begin{equation}\label{eq4.22}
   x_1=\frac{d_1p_1}{A\mu_1},\;\;x_2=\frac{d_2p_2}{A\mu_2}.
\end{equation}
Then there exist positive solutions $\mu_1, \mu_2$ for
\eqref{eq4.21} if only and if there exist positive solutions $x_1,
x_2$ for \eqref{eq4.23} below.
\begin{equation}\label{eq4.23}
  \left\{\begin{array}{l}
           x_1+x_2=1,\\
           S(\Omega_1)x_1^2+x_1x_2+S(\Omega_2)x_2^2=\frac{B}{A^2}.
         \end{array}
  \right.
\end{equation}

Solving the system of equations \eqref{eq4.23}, we have
\begin{equation}\label{eq4.24}
  \left\{\begin{array}{ll}
           x_1= & \frac{1-2S(\Omega_2)+\sqrt{\Delta}}{2(1-S(\Omega_1)-S(\Omega_2))},\\
           x_2= & \frac{1-2S(\Omega_1)-\sqrt{\Delta}}{2(1-S(\Omega_1)-S(\Omega_2))},
         \end{array}
  \right.
\end{equation}
or
\begin{equation}\label{eq4.25}
  \left\{\begin{array}{ll}
           x_1= &  \frac{1-2S(\Omega_2)-\sqrt{\Delta}}{2(1-S(\Omega_1)-S(\Omega_2))},\\
           x_2= &  \frac{1-2S(\Omega_1)+\sqrt{\Delta}}{2(1-S(\Omega_1)-S(\Omega_2))},
         \end{array}
  \right.
\end{equation}
where
\begin{eqnarray}
\nonumber  \Delta &=& (1-2S(\Omega_1))^2-4(1-S(\Omega_1)-S(\Omega_2))\left(\frac{B}{A^2}-S(\Omega_1)\right) \\
\nonumber   &=& (1-2S(\Omega_2))^2-4(1-S(\Omega_1)-S(\Omega_2))\left(\frac{B}{A^2}-S(\Omega_2)\right) \\
\label{eq4.26}   &=&1-\frac{4B}{A^2}(1-S(\Omega_1)-S(\Omega_2)) -
  4S(\Omega_1)S(\Omega_2).
\end{eqnarray}
From \eqref{eq4.1} and \eqref{eq4.16}, we have that
$1-2S(\Omega_1)>0$, $1-2S(\Omega_2)>0$ and
$1-S(\Omega_1)-S(\Omega_2)>0$. So there exist positive numbers $x_1,
x_2$ satisfying \eqref{eq4.23} if only and if
\begin{equation}\label{eq4.27}
 \Delta\geq 0, \;\; 1-2S(\Omega_1)-\sqrt{\Delta}>0 \;\; ~\text{or}\;\; \Delta\geq 0, \;\;1-2S(\Omega_2)-\sqrt{\Delta}>0.
\end{equation}
That is
\begin{equation}\label{eq4.28}
 \frac{1}{1-2S(\Omega_1)}+\frac{1}{1-2S(\Omega_2)}\geq \frac{1}{1-\frac{2B}{A^2}}\;\;\;\mbox{and}\;\;\; S(\Omega_1)<\frac{B}{A^2}
\end{equation}
or
\begin{equation}\label{eq4.29}
 \frac{1}{1-2S(\Omega_1)}+\frac{1}{1-2S(\Omega_2)}\geq \frac{1}{1-\frac{2B}{A^2}}\;\;\;\mbox{and}\;\;\;  S(\Omega_2)<\frac{B}{A^2}.
\end{equation}

By \eqref{eq4.16} and \eqref{eq4.17}, we get \eqref{eq4.28} and
\eqref{eq4.29}. Thus there exist positive numbers $\mu_1, \mu_2$
such that  \eqref{eq4.21} holds. Therefore there exist positive
numbers $\mu_1, \mu_2$ such that the coefficient $a_2$ of the
Rawnsley's $\varepsilon$-function expansion of
$\left((\Omega_1\times\Omega_2)^{{\mathbb{B}}^{d_0}}(\mu_1,\mu_2),
g(\mu_1,\mu_2)\right)$ is a constant.
\end{proof}

\noindent\textbf{Acknowledgments}\quad The first author was
supported by the Scientific Research Fund of Sichuan Provincial
Education Department (No.11ZA156), and the second author was
supported by the National Natural Science Foundation of China
(No.11271291).

\addcontentsline{toc}{section}{References}
\phantomsection
\renewcommand\refname{References}
\small{
}

\clearpage
\end{document}